\newcommand{\bs}[1]{\boldsymbol{#1}}
\begin{document}

\begin{frontmatter}

\title{\bf{Generation of Nested Quadrature Rules for Generic Weight Functions via Numerical Optimization: Application to Sparse Grids}}

\cortext[Cauthor]{Corresponding author}
\author[Vahid]{Vahid Keshavarzzadeh\corref{Cauthor}}
\ead{vkeshava@sci.utah.edu}

\author{Robert M. Kirby\fnref{my1,my2}}
\ead{kirby@sci.utah.edu}

\author{Akil Narayan\fnref{my1,my3}}
\ead{akil@sci.utah.edu}

\fntext[my1]{Scientific Computing and Imaging Institute, University of Utah}
\fntext[my2]{School of Computing, University of Utah}
\fntext[my3]{Department of Mathematics, University of Utah}

\begin{abstract}
We present a numerical framework for computing nested quadrature rules for various weight functions. The well-known Kronrod method extends the Gauss-Legendre quadrature by adding new optimal nodes to the existing Gauss nodes for integration of higher order polynomials. Our numerical method generalizes the Kronrod rule for any continuous probability density function on real line with finite moments. We develop a bi-level optimization scheme to solve moment-matching conditions for two levels of main and nested rule and use a penalty method to enforce the constraints on the limits of the nodes and weights. We demonstrate our nested quadrature rule for probability measures on finite/infinite and symmetric/asymmetric supports. We generate Gauss-Kronrod-Patterson rules by slightly modifying our algorithm and present results associated with Chebyshev polynomials which are not reported elsewhere. We finally show the application of our nested rules in construction of sparse grids where we validate the accuracy and efficiency of such nested quadrature-based sparse grids on parameterized boundary and initial value problems in multiple dimensions.
\end{abstract}

\begin{keyword}
{Nested Quadrature, Optimal Quadrature, Sparse Grids, Numerical Integration, Polynomial Approximation}
\end{keyword}

\end{frontmatter}

\section{Introduction}

A quadrature formula for integration takes the form
\begin{equation*}
\int_{\Gamma} f(x) \omega(x) dx = \sum_{i=1}^n w_i f(x_i),
\end{equation*}
where the weight $\omega:\Gamma \rightarrow R$ is a positive measurable function with finite moments. The weights $w_i$ and nodes $x_i$
are selected to maximize the order $p$ for which the above integral is exact for all polynomials $f$ with degree up to $p$. The well-known optimal
rule for integration in one variable is Gauss quadrature which integrates polynomials of degree $p=2n-1$ or less with $n$ nodes.

In many scientific computing applications such as uncertainty quantification (UQ), integrals are evaluated via quadrature rules where each
quadrature node typically corresponds to an expensive simulation. When integrals with differing accuracies (i.e., polynomial exactness) are desired for error estimation or extrapolation,
it is therefore desirable to use a nested quadrature rule where each formula is a subset of a node set with higher degree of exactness.
A generic strategy is to start with a $n_1$ rule and enrich it with $n_2-n_1 > 0$ nodes where the resulting $n_2$
nodes with a new set of weights integrates higher order polynomials.

Kronrod~\cite{Kronrod64} extended the well-known Gauss-Legendre formulas by adding $n+1$ points to existing $n$ Gauss points in some cases. The resulting nodes integrate
with accuracy $p = 3n+1$ for $n$ even, and $p = 3n+2$ for $n$ odd. He showed that this is the best possible extension in terms
of the maximum degree of exactness and provided tables for up to $n=40$ points.

In this paper we propose a systematic optimization algorithm that generates nested quadrature formulas for general
continuous univariate distributions. Our algorithm is simple and easily implemented in which we satisfy moment-matching conditions that are governed by three-term recurrence rules for orthogonal polynomials. The complete pseudocode is provided in this paper; we plan to distribute the MATLAB implementation of our algorithm in a public repository in the future.


The organization of this paper is as follows. In Section 2 we briefly discuss mathematical setting of Gauss quadrature, Kronrod's extension for nested quadrature and our nested rule. We describe in detail the computational framework for generating the proposed nested quadrature rule in Section 3, and present numerical results in Section 4. Finally, Section 5 discusses the concluding remarks.

\section{Univariate Quadrature}

\subsection{Notation}\label{sec_notation}

Let $\omega(\bm{x})$ be a given non-negative weight function (or a probability density function) whose support is $\Gamma \subset \R$ where $\Gamma$ need not be compact. The space $L^2_\omega(\Gamma)$ is the set of functions $f$ defined by
\begin{align*}
  L^2_\omega(\Gamma) &= \left\{ f: \Gamma \rightarrow \R \; \big| \; \|f \| < \infty \right\}, &
  \left\| f \right\|^2 &= \left( f, f \right), &
  \left( f, g \right) &= \int_\Gamma f(\bm{x}) g(\bm{x}) \omega(\bm{x}) \dx{x}.
\end{align*}
We assume that the weight function has finite, non-vanishing moments for squared polynomials of all orders, i.e.,
\begin{align*}
  0 < \int_\Gamma \left({x}^\alpha\right)^2 \omega(\bm{x}) &< \infty, & \alpha &\in \mathbb{N}_0.
\end{align*}
The assumption above ensures that polynomials are linearly independent in $L^2_\omega$. Throughout we use $\alpha$ to denote the degree of a polynomial. Given an integrable function $f$, we will also use the notation
\begin{align}\label{eq:I-def}
  I(f) = \int_\Gamma f(x) \omega(x) \dx{x}.
\end{align}
In this paper we seek to construct two sets of $n_1$ and $n_2$ points $\left\{ x^{(q)}_1 \right\}_{q=1}^{n_1} \subset \left\{ x^{(q)}_2 \right\}_{q=1}^{n_2} \subset \Gamma$ with positive weights $w^{(q)}_1,w^{(q)}_2 > 0$ such that
\begin{subequations}\label{eq:quadrature-approx}
\begin{align}
  I(f_1) &= \sum_{q=1}^{n_1} w^{(q)}_1 f_1({x_1^{(q)}}), & f_1 &\in \Pi_{\alpha_1}\\
  I(f_2) &= \sum_{q=1}^{n_2} w^{(q)}_2 f_2({x_2^{(q)}}), & f_2 &\in \Pi_{\alpha_2},
\end{align}
\end{subequations}
where $\Pi_{\alpha}$ is the space of polynomials up to degree $\alpha$:
\begin{align}\label{space_pol}
  \Pi_{\alpha} = \mathrm{span} \left\{ {x}^\alpha \;\; \big| \;\; \alpha \in \mathbb{N}_0 \right\}.
\end{align}
In \eqref{eq:quadrature-approx} we assume $\alpha_1 < \alpha_2$; throughout this paper we use $\alpha_1 , \alpha_2$ to denote the polynomials degrees for $f_1$ and $f_2$.

In many applications, the integrand $f$ in \eqref{eq:I-def} exhibits smoothness (e.g., integrable high-order derivatives), which implies high-order convergence when approximating $f$ by a polynomial. Assuming $f$ is smooth, we expect the quadrature rules in \eqref{eq:quadrature-approx} applied to $f$ to be good approximations to $I(f)$ if $\Pi_{\alpha_1}$ is a large enough subspace. Our main goal in this paper is then to make $\Pi_{\alpha_1}$ and $\Pi_{\alpha_2}$ as large as possible while keeping $n_1,n_2$ as small as possible.

In principle, our numerical method applies to general weight functions in multiple dimensions, but will suffer from the standard complications associated with the curse of dimensionality. In this paper, we restrict our attention and numerical examples to computation of univariate nested quadrature rules for a number of standard weight functions.

\subsection{Gauss Quadrature}

It is well known that the $\omega$-Gauss quadrature rule is optimal quadrature rule for univariate integration, in terms of polynomial accuracy. To define this rule, we first prescribe an orthonormal basis for polynomials. Such a basis of orthonormal polynomials can be constructed via a Gram-Schmidt procedure, with elements $p_n(\cdot)$, where $\deg p_n = n$. Orthonormal polynomials are unique up to a multipicative sign, and satisfy the three-term recurrence relation
\begin{align}\label{eq:three-term-recurrence}
  x p_n(x) = \sqrt{b_n} p_{n-1}(x) + a_n p_n(x) + \sqrt{b_{n+1}} p_{n+1}(x),
\end{align}
for $n \geq 0$, with $p_{-1} \equiv 0$ and $p_0 \equiv 1/\sqrt{b_0}$ to seed the recurrence. The recurrence coefficients are given by
\begin{align*}
  a_n &= (x p_n, p_n), & b^2_n &= \frac{(p_n, p_n)}{(p_{n-1}, p_{n-1})},
\end{align*}
for $n \geq 0$, with $b^2_0 = (p_0, p_0)$. Explicit formulas for the $a_n$ and $b_n$ coefficients are available for various classical orthogonal polynomial families, such as the Legendre and Hermite polynomials \cite{szego_orthogonal_1975}. Gaussian quadrature rules are $n$-point rules that exactly integrate polynomials in $\Pi_{2n-1}$~\cite{Stoer2002,Davis07}, and have essentially complete characterizations.



\begin{theorem}[Gaussian quadrature]\label{TH2_1}
Let $x_{1},\ldots,x_{n}$ be the roots of the $n$th orthogonal polynomial $p_n(x)$ and let $w_{1},\ldots,w_{n}$ be the solution of the system of equations
\begin{equation}\label{THM1_0}
\sum_{q=1}^n p_j(x^{(q)}) w^{(q)} =
\begin{cases}
  \sqrt{b_0}, & \textrm{if } j=0\\
  0, & \textrm{if } j=1,\ldots,n-1.\\
\end{cases}
\end{equation}
Then $y^{(q)} \in \Gamma$ and $v^{(q)}>0$ for $q=1,2,\ldots,n$ and
\begin{equation}\label{THM1_1}
\displaystyle \int_{\Gamma} \omega(x) p(x) dx = \sum_{q=1}^n p(x^{(q)}) w^{(q)}
\end{equation}
holds for all polynomials $p \in \Pi_{2n-1}$.
\end{theorem}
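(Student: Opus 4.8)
The plan is to prove the three assertions separately: the nodes lie in $\Gamma$, the quadrature is exact on $\Pi_{2n-1}$, and the weights are positive. The degree of exactness is the substantive claim, and the other two follow from standard orthogonal-polynomial facts together with the exactness itself.

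First I would localize the nodes. By the classical sign-change argument, the roots $x^{(1)},\dots,x^{(n)}$ of $p_n$ are real, simple, and lie in $\Gamma$: if $p_n$ changed sign at only $k<n$ interior points $t_1,\dots,t_k$, then $q(x)=\prod_{i=1}^{k}(x-t_i)\in\Pi_{n-1}$ would make $p_n\,q$ of constant sign on $\Gamma$, forcing $I(p_n q)\neq 0$; but $p_n\perp\Pi_{n-1}$ gives $I(p_n q)=0$, a contradiction. Hence $p_n$ has $n$ distinct real roots in $\Gamma$, establishing $x^{(q)}\in\Gamma$.

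Next I would recognize that the defining linear system \eqref{THM1_0} is exactly the requirement of exactness on a basis of $\Pi_{n-1}$. Since $p_0\equiv 1/\sqrt{b_0}$ and the $p_j$ are orthonormal, $I(p_j)=\sqrt{b_0}\,(p_j,p_0)=\sqrt{b_0}\,\delta_{j,0}$, so the right-hand side of \eqref{THM1_0} is precisely $I(p_j)$ for $j=0,\dots,n-1$. As the nodes are distinct and $\{p_j\}_{j=0}^{n-1}$ is a basis of $\Pi_{n-1}$, the evaluation matrix $\bigl[p_j(x^{(q)})\bigr]$ is a nonsingular generalized Vandermonde, so the weights are well defined, and by linearity the rule integrates every element of $\Pi_{n-1}$ exactly. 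To upgrade this to $\Pi_{2n-1}$ I would use Euclidean division by $p_n$: writing $p=s\,p_n+r$ with $s,r\in\Pi_{n-1}$, orthogonality gives $I(s\,p_n)=(s,p_n)=0$, so $I(p)=I(r)$; and since $p_n(x^{(q)})=0$ we have $p(x^{(q)})=r(x^{(q)})$, whence $\sum_q w^{(q)}p(x^{(q)})=\sum_q w^{(q)}r(x^{(q)})=I(r)=I(p)$, which is \eqref{THM1_1}.

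Finally I would obtain positivity of the weights by feeding the squared Lagrange polynomials into the exactness just proved. For the nodal polynomial $\ell_k\in\Pi_{n-1}$ with $\ell_k(x^{(j)})=\delta_{kj}$, the square $\ell_k^2\in\Pi_{2n-2}\subset\Pi_{2n-1}$ is integrated exactly, giving $w^{(k)}=\sum_q w^{(q)}\ell_k^2(x^{(q)})=I(\ell_k^2)>0$, where the strict inequality uses that $\omega$ is nonnegative with positive total mass (the non-vanishing-moment hypothesis). The hard part will be the root-localization step for a general, possibly non-compact or non-interval support $\Gamma$, and the clean identification of \eqref{THM1_0} with exactness on the orthonormal basis; the division by $p_n$ is the familiar mechanism that doubles the attainable degree.
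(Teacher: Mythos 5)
Your proof is correct, but note that the paper itself offers no proof of this theorem: it is quoted as a classical result, deferring to the cited references (Stoer--Bulirsch, Davis--Rabinowitz). Your argument is exactly the standard one found there --- sign-change localization of the roots of $p_n$ via orthogonality to $\Pi_{n-1}$, unique solvability of the system \eqref{THM1_0} by nonsingularity of the generalized Vandermonde matrix at distinct nodes, the Euclidean division $p = s\,p_n + r$ to lift exactness from $\Pi_{n-1}$ to $\Pi_{2n-1}$, and positivity of weights from $w^{(k)} = I(\ell_k^2) = \|\ell_k\|^2 > 0$ --- so there is nothing to contrast. The one caveat is the point you flag yourself: the sign-change argument places the roots in the convex hull of the support of $\omega$, so the conclusion $x^{(q)} \in \Gamma$ as literally stated needs $\Gamma$ to be an interval, which is the paper's standing assumption (a continuous density on a connected support in $\R$).
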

Having knowledge of only a finite number of recurrence coefficients $a_n$, $b_n$, one can compute the Gauss quadrature rule via well established algorithmic strategies in \cite{Golub69,Gautschi68}.

\subsection{Kronrod Nested Rule}
Two different Gauss quadrature sets with e.g. $n$ and $n+1$ nodes can estimate functions with different degrees e.g. $2n-1$ and $2n+1$. The natural question is: what is the maximum degree that the combination of these two sets i.e. $2n+1$ nodes can integrate. Kronrod showed that for the same amount of labour i.e. $2n+1$ nodes in conjunction with $n$-node Gaussian rule one can integrate $3n+1$ ($n$ even) and $3n+2$ ($n$ odd) polynomials.



Given the Gaussian quadrature $A$ with $n$ points Kronrod found quadrature $B$ different than $A$ that has the maximum possible accuracy. To construct quadrature $B$, let $l_n(x)$ denote the degree-$n$ Legendre polynomial, and define $k_{2n+1}(x)=l_n(x) p_{n+1}(x)$ as a polynomial of degree $2n+1$ where $p_n$ is a polynomial of degree $n$.
The polynomial $k_{2n+1}$ is defined such that it is orthogonal to all powers $x^i$ for $i=0,\ldots,n$. 
Let the roots of $k_{2n+1}$ be $x_1,\ldots,x_{2n+1}$. The $(2n+1)$-point interpolatory quadrature rule of the form
\begin{equation*}
B(f) =  \sum_{i=1}^{2n+1} w_i f(x_i) dx,
\end{equation*}
has an accuracy no less than $2n$. However, the accuracy of this rule is substantially larger than $2 n$.
\begin{theorem}[Kronrod Rule~\cite{Kronrod64}]\label{TH2_1}
Quadrature $B(f)$ has an accuracy of $3n+1$ for even $n$ and of $3n+2$ for odd $n$.
\end{theorem}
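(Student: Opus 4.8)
The plan is to reduce the exactness claim to the single orthogonality property that defines $k_{2n+1}$, namely $I(k_{2n+1}\, q) = 0$ for every $q \in \Pi_n$, and then to exploit the symmetry of $\omega$ about the origin to gain one additional degree when $n$ is odd. First I would record that $B$ is, by construction, the interpolatory rule on the $2n+1$ roots of $k_{2n+1}$, hence exact on $\Pi_{2n}$; this is the ``accuracy no less than $2n$'' already noted. The engine of the argument is Euclidean division by $k_{2n+1}$: for a polynomial $f$ of degree $d \ge 2n+1$, write
\begin{equation*}
  f = k_{2n+1}\, s + r, \qquad \deg r \le 2n, \qquad \deg s = d - (2n+1).
\end{equation*}
Since every node $x_i$ is a root of $k_{2n+1}$, applying $B$ annihilates the first term, so $B(f) = B(r) = I(r)$, the last equality because $r \in \Pi_{2n}$. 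On the other hand $I(f) = I(k_{2n+1}\, s) + I(r)$, whence
\begin{equation*}
  B(f) = I(f) \quad\Longleftrightarrow\quad I(k_{2n+1}\, s) = 0 .
\end{equation*}
The entire theorem thus concerns when this ``remainder integral'' vanishes.

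For $d \le 3n+1$ we have $\deg s \le n$, so $s \in \Pi_n$ and $I(k_{2n+1}\, s)=0$ directly from the defining orthogonality of $k_{2n+1}$; this already yields exactness on $\Pi_{3n+1}$ for every $n$. To capture the extra degree in the odd case I would take $d = 3n+2$, so that $\deg s = n+1$; writing $s = c\,x^{n+1} + \tilde s$ with $\tilde s \in \Pi_n$, the tail integrates against $k_{2n+1}$ to zero and one is left with $I(k_{2n+1}\, s) = c\, I\!\left(k_{2n+1}\, x^{n+1}\right)$. Here the symmetry of $\omega$ enters. Since $l_n$ has parity $(-1)^n$, and the factor $p_{n+1}$ inherits parity $(-1)^{n+1}$ from the even weight by uniqueness of the monic solution of the symmetric orthogonality conditions, the product $k_{2n+1}=l_n p_{n+1}$ is an odd function. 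Consequently $k_{2n+1}(x)\,x^{n+1}$ has parity $(-1)^n$, so for odd $n$ it is an odd integrand against an even weight and $I\!\left(k_{2n+1}\, x^{n+1}\right)=0$. This promotes exactness to $\Pi_{3n+2}$ precisely in the odd case.

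Finally, to confirm that the stated figures are sharp and not merely lower bounds, I would exhibit an explicit failing polynomial in each parity class: for even $n$ take $f = x^{n+1} k_{2n+1}$ (degree $3n+2$) and for odd $n$ take $f = x^{n+2} k_{2n+1}$ (degree $3n+3$); in both cases $B(f)=0$ while $I(f)$ reduces, via the parity bookkeeping above, to a moment whose integrand is now \emph{even} and therefore not forced to vanish. The step I expect to be the genuine obstacle is twofold, and both parts hinge on the factor $p_{n+1}$: (i) rigorously establishing its parity $(-1)^{n+1}$ from the symmetry of $\omega$, which is exactly what splits the theorem into its even and odd cases; and (ii) proving that the critical moment $I\!\left(k_{2n+1}\, x^{n+1}\right)$ is genuinely nonzero for even $n$ --- equivalently, that $k_{2n+1}$ is orthogonal to $\Pi_n$ but not to $\Pi_{n+1}$ --- so that exactness truly stops at $3n+1$ there. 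By contrast, the positive (achievability) direction is comparatively routine once the division identity is in place; it is the non-degeneracy and symmetry of $p_{n+1}$ that carry the real content.
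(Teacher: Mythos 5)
The paper offers no proof to compare against: Theorem~\ref{TH2_1} is quoted directly from Kronrod's work \cite{Kronrod64}. Judged on its own, the positive (lower-bound) half of your argument is correct and is the standard one: Euclidean division $f = k_{2n+1}s + r$, exactness of the interpolatory rule $B$ on $\Pi_{2n}$, and the defining orthogonality $I(k_{2n+1}q)=0$ for $q\in\Pi_n$ give exactness on $\Pi_{3n+1}$ for every $n$, and your parity bookkeeping for the extra degree when $n$ is odd is accurate. Your obstacle (i), the parity of $p_{n+1}$, is genuinely fillable: the monic solution of the orthogonality conditions is unique because the moment matrix $M_{kj} = I\!\left(l_n\, x^{j+k}\right)$, $0\le k,j\le n$, vanishes whenever $j+k<n$ and is nonzero on the anti-diagonal $j+k=n$, hence is nonsingular; since $(-1)^{n+1}p_{n+1}(-x)$ is another monic solution (using evenness of the weight and the parity of $l_n$), uniqueness forces $p_{n+1}(-x)=(-1)^{n+1}p_{n+1}(x)$.

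The genuine gap is in the sharpness half, which you have correctly located but not closed. By your own reduction, exactness stopping at $3n+1$ for even $n$ is \emph{equivalent} to $I\!\left(k_{2n+1}\,x^{n+1}\right)\neq 0$, and observing that this integrand is even and ``therefore not forced to vanish'' is not a proof: even integrands can integrate to zero, and nothing in the defining orthogonality of $k_{2n+1}$ rules this out. This non-degeneracy is in fact the hard content of the exact-degree statement; it does not follow from parity or from the construction, and known arguments require a separate analysis of the Stieltjes factor $p_{n+1}$ (Szeg\H{o}'s study of its expansion coefficients, and later work of Rabinowitz establishing the exact degree of precision). The same issue recurs for odd $n$ at degree $3n+3$, where you would need $I\!\left(k_{2n+1}\,x^{n+2}\right)\neq 0$. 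In short: if the theorem is read as asserting exactness on $\Pi_{3n+1}$ (resp.\ $\Pi_{3n+2}$), your proof is complete modulo the routine uniqueness/parity lemma; if it is read, as the paper's introduction indicates (``best possible extension''), as the exact maximal degree, then the non-vanishing of the critical moments is missing, and it is not a routine step.
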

Kronrod found these quadrature sets for Legendre polynomials up to $40$ points. Our numerical method finds nested quadrature rules for arbitrary high order and various weight functions.

\subsection{Nested Rule via Numerical Optimization}\label{sec:nested_rule}

The Kronrod rule revolves around the ability to find the root of the polynomial $k_{2n+1}$ and as mentioned it has been proven for certain number of nodes and polynomial degrees and weight function. Similarly to the Kronrod rule we construct a nested rule which centers around the direct moment-matching conditions for the lower and the upper rule. The following result states the essence of our nested numerical quadrature:

\begin{proposition}\label{PR2_1}
  Let $n_1, n_2, \alpha_1, \alpha_2 \in \N$ be given, with $n_1 < n_2$ and $\alpha_1 < \alpha_2$. 
  Suppose that $\left\{ x^{(q)}_1 \right\}_{q=1}^{n_1} \subset \left\{ x^{(q)}_2 \right\}_{q=1}^{n_2}$  and $\left\{ w^{(q)}_1 \right\}_{q=1}^{n_1}$, $\left\{ w^{(q)}_2 \right\}_{q=1}^{n_2}$ are the solution of two systems of equations
\begin{equation}\label{PRO2_0}
\begin{array}{l }
\displaystyle \sum_{q=1}^{n_1} {p}_j(x_1^{(q)}) w_1^{(q)} =
\begin{cases}
  \sqrt{b_0}, &\textrm{if } j= 0\\
  0, &\textrm{if } j=1,\ldots,\alpha_1\\
\end{cases}
\\
\displaystyle \sum_{q=1}^{n_2} {p}_{j}(x_2^{(q)}) w_2^{(q)} =
\begin{cases}
  \sqrt{b_0}, &\textrm{if } j= 0\\
  0, &\textrm{if }  j=1,\ldots,\alpha_2\\
\end{cases}
\end{array}
\end{equation}
then
\begin{equation}\label{PRO2_1}
\displaystyle \int_{\bm \Gamma} \omega( x) p_1(x) d x = \sum_{q=1}^{n_1} p_1(x_1^{(q)}) w_1^{(q)}, \quad \displaystyle \int_{\bm \Gamma} \omega(x) p_2(x) d x = \sum_{q=1}^{n_2} p_2( x_2^{(q)}) w_2^{(q)}
\end{equation}
holds for all polynomials $p_1 \in {\Pi}_{\alpha_1},p_2 \in {\Pi}_{\alpha_2}$.
\end{proposition}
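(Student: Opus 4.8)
The plan is to reduce the claim to a single observation about the exact integrals of the orthonormal basis polynomials, after which exactness on the full polynomial spaces follows by linearity. Since $\deg p_j = j$, the family $\left\{ p_0, p_1, \ldots, p_\alpha \right\}$ is a basis for $\Pi_\alpha$. It therefore suffices to verify the two quadrature identities in \eqref{PRO2_1} on the basis elements $p_j$ individually: the general case follows by writing an arbitrary test polynomial as a linear combination $\sum_{j=0}^{\alpha} c_j p_j$ and invoking linearity of both $I(\cdot)$ and the quadrature sums.

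The central step is to compute the exact integral $I(p_j)$ for each $j$ and to recognize it in the right-hand sides of \eqref{PRO2_0}. Since $p_0 \equiv 1/\sqrt{b_0}$ is constant, the inner product of $p_j$ with $p_0$ is $\left( p_j, p_0 \right) = \frac{1}{\sqrt{b_0}} \int_\Gamma p_j(x)\,\omega(x)\,\dx{x} = \frac{1}{\sqrt{b_0}} I(p_j)$. Orthonormality of the basis gives $\left( p_j, p_0 \right) = \delta_{j0}$, so that
\begin{equation*}
I(p_j) = \sqrt{b_0}\,\delta_{j0} =
\begin{cases}
  \sqrt{b_0}, & j = 0, \\
  0, & j \geq 1.
\end{cases}
\end{equation*}
This is precisely the right-hand side appearing in both moment-matching systems of \eqref{PRO2_0}.

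With this identity in hand the proof closes immediately. The first system in \eqref{PRO2_0} asserts that $\sum_{q=1}^{n_1} p_j(x_1^{(q)}) w_1^{(q)} = I(p_j)$ for every $j = 0, 1, \ldots, \alpha_1$; that is, the $n_1$-point rule integrates each basis polynomial spanning $\Pi_{\alpha_1}$ exactly. By linearity it integrates every $p_1 \in \Pi_{\alpha_1}$ exactly, which is the first identity in \eqref{PRO2_1}. The identical argument applied to the second system yields exactness of the $n_2$-point rule on $\Pi_{\alpha_2}$.

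I do not anticipate a genuine obstacle, since this mirrors the standard proof of Gaussian quadrature exactness; the only point requiring care is the bookkeeping of the normalization constant $\sqrt{b_0} = I(1) = \int_\Gamma \omega(x)\,\dx{x}$, which ensures that the $j=0$ condition correctly encodes the total mass of the measure. It is worth emphasizing that the nesting constraint $\left\{ x_1^{(q)} \right\} \subset \left\{ x_2^{(q)} \right\}$ plays \emph{no} role in establishing exactness: each rule's accuracy follows independently from its own moment conditions, and nesting is a structural requirement imposed during construction rather than a hypothesis needed for the conclusion.
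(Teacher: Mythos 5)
Your proof is correct and takes essentially the same route as the paper: the paper's entire justification is the remark that $\int_\Gamma p_j(x)\,\omega(x)\,dx = 0$ for $j \neq 0$ by orthogonality, so that \eqref{PRO2_0} are exact moment-matching conditions on the bases of $\Pi_{\alpha_1}$ and $\Pi_{\alpha_2}$, which is precisely your computation $I(p_j) = \sqrt{b_0}\,\delta_{j0}$ followed by linearity. One small slip in your closing remark (not used in the argument): $I(1) = b_0$, not $\sqrt{b_0}$, since $(p_0,p_0)=1$ with $p_0 \equiv 1/\sqrt{b_0}$ forces $b_0 = \int_\Gamma \omega(x)\,dx$; the two coincide only under the paper's probability-density normalization $b_0 = 1$.
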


We note that that $\int_\Gamma {p}_{j}({x}) \omega(x) dx=0$ when $j \neq 0$ due to orthogonality, and hence Equations~\eqref{PRO2_0} are simply moment-matching conditions over two different polynomial spaces ${\Pi}_{\alpha_1}$ and ${\Pi}_{\alpha_2}$. The existence of quadrature rules satisfying the above conditions does not guarantee the positivity of weights nor it ensures that the nodes lie in $\Gamma$. We enforce these conditions in our numerical method, which is explained in the next section. We also provide a general guideline for the choices of $n_1,n_2$ and $\alpha_1,\alpha_2$.

We finally note that Proposition \ref{PR2_1} can be generalized for multi-dimensional polynomials however the relationship between $n_1,n_2,\alpha_1,\alpha_2$ is less obvious, and the computational cost becomes prohibitive. Hence we focus onlly on computation of univariate nested rules in this paper. Next, we briefly discuss the multivariate construction of quadrature rules based on the Smolyak algorithm which is known to enjoy efficiency gains from using univariate nested rules.

In the next section we provide a computational framework that solves a constrained version of \eqref{PRO2_0} via optimization. 

\section{Numerical Method}\label{S3}

We aim to compute nodes $\bm{x_1} = \left\{ {x_1^{(1)}}, \ldots, {x_1^{(n_1)}} \right\} \subset \bm{x_2} = \left\{ {x_2^{(1)}}, \ldots, {x_2^{(n_2)}} \right\} \in \Gamma$
and positive weights $\bm{w_1}=\left\{ {w_1^{(1)}},\ldots, {w_1^{(n_1)}}  \right\}, \bm{w_2}=\left\{ {w_2^{(1)}},\ldots, {w_2^{(n_2)}}  \right\} \in (0, \infty)$ that satisfies \eqref{PRO2_0} up to a tolerance of a prescribed $\epsilon > 0$. We directly formulate \eqref{PRO2_0} as
\begin{equation}\label{S3_1_0}
\begin{array}{l}
  \bm R_1(\bm d_1) = \bm V_{\alpha_1}(\bm x_1) \bm w_1 - \sqrt{b_0} \bm{e}_{1,\alpha_1+1} = \bm 0,  \\
  \\
  \bm R_2(\bm d_2) = \bm V_{\alpha_2}(\bm x_2) \bm w_2 - \sqrt{b_0} \bm{e}_{1,\alpha_2+1} = \bm 0,  \\
  \\
  \bm x_1 \subset \bm x_2 \in \Gamma,\\
  \\
  \bm w_1,\bm w_2 > \bm 0,
\end{array}
\end{equation}
where $\bm V$ denotes the Vandermonde matrix, e.g. $\bm V_{\alpha_1}$ is the Vandermonde matrix that consists of polynomial evaluations up to degree $\alpha_1$, $\bm{e}_{1,\alpha_i+1}$ is the first unit vector with size $\alpha_i+1$ and $\bm d_1 = (\bm x_1, \bm w_1), \bm d_2 = (\bm x_2, \bm w_2)$ are decision variables. Instead of solving this constrained root finding problem, we introduce a closely related constrained minimization problem on decision variables $\bm d=(\bm x_2,\bm w)$:

\begin{equation}\label{S3_3_3}
\begin{array}{r l l}
  \displaystyle \mathop{\min}_{\bm x_2, \bm w} & \displaystyle ||\bm R||_2  &  \\
  \text{subject to} & \bm x_2 \in \Gamma,\\
                    & \bm w> \bm 0. &
\end{array}
\end{equation}
Above we have eliminated the decision variable $\bm{x}_1$, since $\bm{x}_1 \subset \bm{x}_2$. We have also introduced the vectors $\bm{R}$ and $\bm{w}$, defined as
\begin{equation}\label{eq:R-def}
\begin{array}{l l }
 {\bm{R}}=  \left[
    \begin{array}{c}
      \bm{R_1} \\
      \bm{R_2}
    \end{array}
  \right],
  &
  {\bm{w}}=  \left[
    \begin{array}{c}
      \bm{w_1} \\
      \bm{w_2}
    \end{array}
  \right].
  \end{array}
\end{equation}
The total number of decision variables above is $n_1 + 2 n_2$. Ideally we need the solution to \eqref{S3_1_0}, which also solves \eqref{S3_3_3}, but the reverse is not necessarily true. In practice we fix the polynomial degree $\alpha_2$, solve \eqref{S3_3_3}, and when the solution exhibits nonzero values of $\|\bm{R}\|$, we decrease $\alpha_2$ and repeat. Using this strategy, we empirically find that we can satisfy $\|\bm{R}\| \leq \epsilon$ in all situations we have tried i.e. we have been able to re-generate available Kronrod rules with very small tolerances, and we have been able to find new nested quadrature rules.

Our approach therefore effectively solves \eqref{S3_1_0} via repeated applications of \eqref{S3_3_3}. Our numerical approach to solve \eqref{S3_3_3} is a modification of the algorithm in \cite{Keshavarzzadeh_DQ2017}. We describe in brief the ingredients of this algorithm, and in more detail the portions that are specific to our construction. The overall algorithm has four major steps, each of which are described in the subsequent sections:
\begin{enumerate}
  \item Section \ref{S3_3}  Penalization: transforming constrained root finding into unconstrained minimization problem by augmenting the objective with penalty functions
 \item Section \ref{sec:method-newton}  Iteration: Gauss-Newton algorithm for unconstrained minimization
  \item Section \ref{S3_4}  Regularization: numerical regularization to address ill-conditioned Gauss-Newton update steps
  \item Section \ref{S3_2}  Initialization: specification of an initial guess
\end{enumerate}

\subsection{Penalty Method}\label{S3_3}

We use penalty methods to solve the constrained optimization problem~\eqref{S3_3_3} by adding a high cost for violated constraints to the objective function. We subsequently solve an unconstrained minimization problem on the augmented objective.

We choose a popular penalty function, the non-negative and smooth quadratic function. Taking as an example $\Gamma = [-1,1]$, the constraints and corresponding penalties $P_j$, $j=1, \ldots, 2n_2+n_1$ as a function of the $2n_2+n_1$ decision variables $\bm{d} = \left(\bm{x_2}, \bm{w}\right)$ can be expressed as
\begin{align*}
  -1 \leq x^{(j)}_2 \leq 1 \hskip 5pt &\Longrightarrow \hskip 5pt P_j\left(\bm{d}\right) = \left(\max[0,x_j-1,-1-x_j]\right)^2, j = 1, \ldots, n_2\\
  w^{(j)}_2 \geq 0 \hskip 5pt &\Longrightarrow \hskip 5pt P_{n_2+j}\left(\bm{d}\right) = \left(\max[0,-w_j]\right)^2, j = 1, \ldots, n_2\\
  w^{(j)}_1 \geq 0 \hskip 5pt &\Longrightarrow \hskip 5pt P_{2n_2+j}\left(\bm{d}\right) = \left(\max[0,-w_j]\right)^2, j = 1, \ldots, n_1.
\end{align*}
The total penalty in this case is then expressed as
\begin{align*}
  P^2\left(\bm{d}\right) = \sum_{j=1}^{2n_2+n_1} P^2_j\left(\bm{d}\right).
\end{align*}

The penalty approach solves the constrained problem \eqref{S3_3_3} by using a sequence of unconstrained problems indexed by $k \in \N$ with objective functions
\begin{align}\label{eq:g-def}
  g\left(c_k, \bm{d} \right) = \left\|\widetilde{\bm{R}}_k \right\|^2_2 = \left\| \bm{R}\right\|^2_2 + c^2_k P^2\left(\bm{d}\right),
\end{align}
where
\begin{align*}
  \widetilde{\bm{R}}_k &= \left[
    \begin{array}{c}
      \bm{R} \\
      c_k P_1 \\
      c_k P_2 \\
      \vdots \\
      c_k P_{2n_2+n_1}
    \end{array}
  \right].
\end{align*}
The positive constants $c_k$ are monotonically increasing with $k$, i.e., $c_{k+1}> c_k$. Each unconstrained optimization yields an updated solution point $\bm d^{k}$, and as $c_k \rightarrow \infty$ the solution point of the unconstrained problem will converge to the solution of constrained problem.

We now formulate an unconstrained minimization problem with sequence of increasing $c_k$ associated with the objectives $g$ in \eqref{eq:g-def} for the decision variables $\bm{d} = (\bm{x_2}, \bm{w})$,
\begin{align}\label{eq:unconstrained-g}
  \displaystyle \mathop{\min}_{\bm{d}} \displaystyle g(c_k, \bm{d})
\end{align}
which provides an approximation to the solution of the constrained root-finding problem \eqref{S3_1_0}. We specify the constants $c_k$ as follows: if $\bm{d}$ is the current iterate for the decision variables, we use the formula
\begin{align*}
  c_k = \max \left\{A, \frac{1}{||\bm{R}(\bm d)||_2} \right\},
\end{align*}
where $A$ is a tunable parameter that is meant to be large and set to $A = 10^3$ in our numerical examples. We also note that we never have $c_k = \infty$ so that our iterations cannot exactly constrain the computed solution to lie in the feasible set. In practice we reformulate constraints to have non-zero penalty within a small radius inside the feasible set. For example, instead of enforcing $w_j > 0$, we enforce $w_j > 10^{-6}$.


\subsection{The Gauss-Newton Minimization}\label{sec:method-newton}
Now that the constrained problem \eqref{S3_3_3} is transformed to a sequence of unconstrained problems \eqref{eq:unconstrained-g}, we can use standard unconstrained optimization tools such as gradient descent or Newton's method.

The gradient-based approaches require the Jacobian of the objective function with respect to the decision variables. We define

\begin{align}\label{S3_3_6}
  \bm{J}(\bm{d}) &= \left[
  \begin{array}{c}
      \bm J_1  \\
      \bm J_2
          \end{array}
     \right]  =\left[
  \begin{array}{c}
      \partial \bm R_1 /\partial \bm d \\
      \partial \bm R_2/ \partial \bm d
    \end{array}
     \right] \in \R^{(\alpha_1+\alpha_2+2)\times (2n_2+n_1)  },
  &
  \widetilde{\bm{J}}_k = \pfpx{\widetilde{\bm{R}}_k}{\bm{d}} &= \left[
    \begin{array}{c}
      \bm{J} \\ 
      c_k \partial P_1 /\partial \bm d \\
      c_k \partial P_2 / \partial \bm d  \\
      \vdots \\
      c_k \partial P_{2n_2+n_1} / \partial \bm d
    \end{array}
  \right],
\end{align}
where $\pfpx{P_j}{\bm{d}} \in \R^{1 \times (2n_2+n_1)}$ is the Jacobian of $P_j$ with respect to the decision variables. We note that the Lipschitz continuity as well as easy evaluation of Jacobians are ensured using the quadratic penalty function. For instance, the first part of $\bm{J}$ has entries
\begin{align}\label{eq:jacobian-entries}
  (J_1)_{m, i} &= \pfpx{{p_1}_{m}\left(x_1^{(i)}\right)}{x_1^{(i)}} w_i,  &
  (J_1)_{m, n_1 + i} &= {p_1}_{m}({x_1}^{(i)}),
\end{align}
for $m = 0, \ldots, \alpha_1$, $i = 1, \ldots, n_1$ where we define ${p_1}$ as in Section~\ref{sec:nested_rule}. The formula is similar for entries of $\bm{J}_2$. Computing entries of the Jacobian matrix $\bm{J}$ is straightforward since we only need to compute derivatives of univariate polynomials. A manipulation of the three-term recurrence relation \eqref{eq:three-term-recurrence} yields the recurrence
\begin{align*}
  \sqrt{b_{m+1}} p_{m+1}'(x) = (x - a_m) p_m'(x) - \sqrt{b}_m p_{m-1}'(x) + p_m(x),
\end{align*}
which is used to evaluate the partial derivatives in $\bm{J}$.

We use the same index iterations $k$ as that defining the sequence of unconstrained problems \eqref{eq:unconstrained-g}; Thus, our choice of $c_k$ changes at each iteration. A simple gradient descent is based on the update with the form
\begin{align*}
  \bm d^{k+1} &= \bm d^{k} - \alpha \pfpx{\|\widetilde{\bm{R}}_k\|_2}{\bm{d}}, & \pfpx{\|\widetilde{\bm{R}}_k\|_2}{\bm{d}} = \frac{\widetilde{\bm{J}}_k^T \widetilde{\bm{R}}}{\|\widetilde{\bm{R}}_k\|_2},
\end{align*}
with $\alpha$ a customizable step length that is frequently optimized via, e.g., a line-search algorithm. In contrast, a variant of Newton's method applied to a rectangular systems is the Gauss-Newton method \cite{Stoer2002}, with update iteration
\begin{align}\label{eq:Gauss-newton}
  \bm d^{k+1} &= \bm d^{k} - \Delta \bm{d}, & \Delta \bm{d} &= \left( \widetilde{\bm{J}}_k^T \widetilde{\bm{J}}_k \right)^{-1} \widetilde{\bm{J}}_k^T \widetilde{\bm{R}}_k,
\end{align}
where both $\widetilde{\bm{J}}_k$ and $\widetilde{\bm{R}}_k$ are evaluated at $\bm{d}^k$. The iteration above reduces to the standard Newton's method when the system is square, i.e., $\alpha_1+\alpha_2+2 = 2n_2+n_1$. It is well known that Newton's method converges quadratically to a local solution for a sufficiently close initial guess $\bm{d}^0$ versus the gradient descent which has linear convergence \cite{Bertsekas08}. We use Gauss-Newton iterations that we find robust for our numerical algorithm.

Starting with an initial guess $\bm{d}^0$, we repeatedly apply the Gauss-Newton iteration \eqref{eq:Gauss-newton} until a stopping criterion is met. We terminate our iterations when the residual norm falls below a user-defined threshold $\epsilon$, i.e., when $||\widetilde{\bm R}||_2 < \epsilon$.

We also define another useful quantity to monitor during the iteration process which is the magnitude of the Newton decrement. This measure often reflects quantitative proximity to the optimal point \cite{Boyd04}. Based on its definition,
the Newton decrement is the norm of the Newton step in the quadratic norm defined by the Hessian. I.e., the Newton decrement norm for a function $f(\bm{x})$, is $|| \Delta \bm d||_{\nabla^2 f(\bm x)} = (\Delta \bm d^T \nabla^2 f(\bm x) \Delta \bm d  )^{1/2}$, where $\nabla^2 f$ is the Hessian of $f$. In our minimization procedure with non-squared systems we define
\begin{equation}\label{S3_1_6}
  \eta  = \big(\Delta \bm d^T (\widetilde{\bm{J}}_k^T \widetilde{\bm{R}}_k)\big)^{1/2},
\end{equation}
as a surrogate for a Hessian-based Newton decrement, which decreases as $\bm d \rightarrow \bm d^{*}$.

Finally we note that, for a given nested quadrature rule with $(n_1,n_2,\alpha_1,\alpha_2)$ we cannot guarantee that a solution to \eqref{S3_1_0} exists. In this case our Gauss-Newton iterations will exhibit residual norms stagnating at some positive value while the Newton decrement is almost zero. When this occurs, we either re-initialize our decision variables or decrease the magnitude of the higher order $\alpha_2$ and continue the optimization procedure. By gradually decreasing $\alpha_2$, we find a successful combination of $(n_1,n_2,\alpha_1,\alpha_2)$ that meet the residual tolerance criterion.

\subsection{Tikhonov Regularization}\label{S3_4}
The evaluation of the Newton update~\eqref{eq:Gauss-newton} is a critical part of our scheme. In our rectangular system, the update is the least-squares solution $\Delta \bm{d}$ to the linear system
\begin{align*}
  \widetilde{\bm{J}} \Delta \bm{d} = \widetilde{\bm{R}},
\end{align*}
where $\widetilde{\bm{J}} = \widetilde{\bm{J}}_k\left(\bm{d}^k\right)$, and $\widetilde{\bm{R}} = \widetilde{\bm{R}}_k\left(\bm{d}^k\right)$ and in this section we omit explicit notational dependence on the iteration index $k$. Based on our numerical experience, the Jacobian  $\widetilde{\bm{J}}$ can be ill-conditioned. Therefore, to effectively solve the above least-squares problem we consider a generic regularization of the equality:
\begin{equation}\label{S3_4_1}
  \displaystyle \mathop{ \textrm{minimize}}_{\Delta \bm d} \quad ||\widetilde{\bm{J}} \Delta \bm d - \widetilde{\bm{R}} ||_p  \quad \textrm{subject to} \quad  || \Delta \bm d||_q < \tau,
\end{equation}
where $p$, $q$, and $\tau$ are free parameters. The Pareto curve that characterizes the trade off between the objective norm and solution norm is shown to be convex in~\cite{Vandenberg08,Vandenberg11} for generic norms $1 \leq (p,q) < \infty$. In this paper we utilize a 2-norm regularization i.e. $p=q=2$ which can be solved via an approach in \cite{golub_matrix_1996}; however, this procedure provides no clear guideline on choosing $\tau$. Thus we adopt an alternative approach based on Tikhonov regularization. This approach is a penalized version of the $p=q=2$ optimization \eqref{S3_4_1}:
\begin{equation}\label{S3_4_2}
\Delta \bm d_{\lambda} = \textrm{argmin} \Big \{ ||\widetilde{\bm{J}} \Delta \bm d - \widetilde{\bm{R}} ||^2_2 + \lambda ||\Delta \bm d||_2^2  \Big \},
\end{equation}
where $\lambda$ is a regularization parameter that can be selected by the user. However, this parameter can significantly impact the quality of the solution with respect to the original least-squares problem. Assuming that a value for $\lambda$ is prescribed, the solution to \eqref{S3_4_2} can be obtained via the singular value decomposition (SVD) of $\widetilde{\bm{J}}$. The SVD of matrix  $\widetilde{\bm{J}}_{\mathrm{N} \times \mathrm{M}}$ (for $\mathrm{N}<\mathrm{M}$) is given by
\begin{equation}
  \widetilde{\bm{J}} = \displaystyle \sum_{i=1}^{\mathrm{N}} \bm u_i \sigma_i \bm v^T_i,
\end{equation}
where $\sigma_i$ are singular values (in decreasing order), and $\bm{u}_i$ and $\bm{v}_k$ are the corresponding left- and right-singular vectors, respectively. The solution $\Delta \bm d_{\lambda}$ is then obtained as
\begin{equation}\label{S3_4_5}
  \Delta \bm d_{\lambda} = \displaystyle \sum_{i=1}^{\mathrm{N}} \rho_i   \displaystyle \frac{ \bm u^T_i \widetilde{\bm{R}}}{ \sigma_i} \bm v_i,
\end{equation}
where $\rho_i$ are Tikhonov filter factors denoted by
\begin{equation}\label{S3_4_4}
\begin{array}{l }
{\rho_i} =  \displaystyle \frac{\sigma_i^2}{\sigma_i^2+\lambda^2} \simeq \begin{cases}
1 \hspace{1.25cm} \sigma_i \gg \lambda,  \\
\sigma_i^2/\lambda^2 \hspace{0.5cm}   \sigma_i \ll \lambda.
\end{cases}
\end{array}
\end{equation}
Tikhonov regularization filters singular values that are below the threshold $\lambda$. Therefore a suitable $\lambda$ is bounded by the extremal singular values of $\widetilde{\bm{J}}$. One approach to select the regularization parameter is to  analyze the ``$L$-curve" of singular values \cite{Hansen98,Hansen93} and choose $\lambda$ that correponds to the corner of $L$-curve that has the maximum curvature. Approximation of the curvature with respect to singular value index can be used to find the index with maximum curvature, and the singular value corresponding to this index prescribes $\lambda$.

In practice, we evaluate the curvature of the singular value spectrum via finite differences on $\log(\sigma_i)$ (where the singular values are directly computed) and choose the singular value that corresponds to the first spike in the spectrum. The regularization parameter can be updated after several, e.g., $30-50$, Gauss-Newton iterations. In our small size, i.e., univariate, problems a small number of fixed appropriate $\lambda$ throughout the Gauss-Newton scheme also yields the desirable solutions.

In our experiments we also find that adding a regularization parameter to all singular values and computing the regularized Newton step as $\Delta \bm d_{\lambda} = \sum_{i=1}^{\mathrm{N}}  [({ \bm u^T_i \widetilde{\bm{R}}})/({ \sigma_i + \lambda})] \bm v_i$ enhances the convergence when $\bm d $ is close to the root i.e. $||\widetilde{\bm{R}}||$ is sufficiently small.

\subsection{Initialization}\label{S3_2}

The first step of the algorithm requires an initial guess $\bm{d}^0$ for nodes and weights. Mimicking the Kronrod approach, we choose $n_1$ for the number of nested nodes and $n_2=2n_1+1$ for the number of main nodes. We also fix the order for the nested rule based on Gauss quadrature (similarly to Kronrod rule) i.e. $\alpha_1=2n_1-1$. The order for the main rule $\alpha_2$ is varied until a desired residual norm is achieved. We initialize the main and nested nodes in an interlacing order as depicted in the Figure~\ref{fig_init}.

\begin{figure}[h]
\includegraphics[width=6in]{./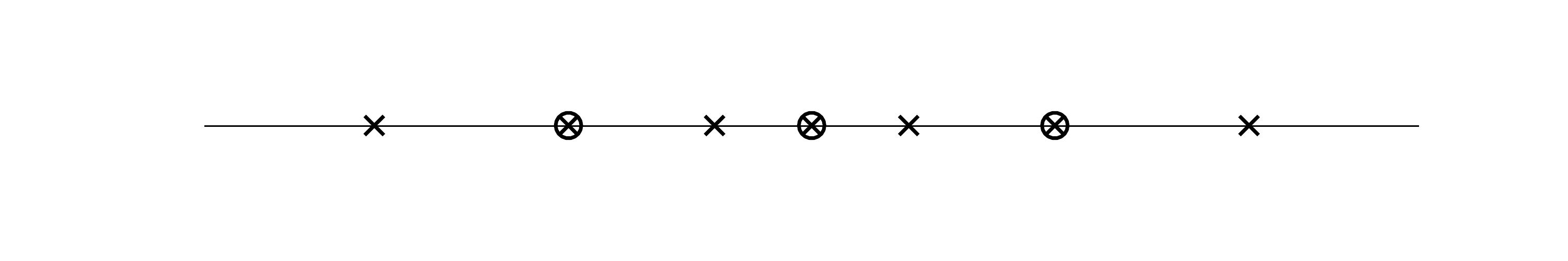}\\
\caption{\small{Configuration of initial nested and main nodes for $n_1=3,~n_2=2n_1+1=7$. The nested nodes $\bm{x}_1$ are shown with circumscribed circles ($\circ$), whereas the main nodes $\bm{x}_2$ are marked with $\times$.}}\label{fig_init}
\end{figure}

We consider $\omega$ a probability density in all cases (i.e., $b_0 = 1$) so that the weights for main and nested rules are normalized to yield $\sum_i w^{(i)}=1$. We initialize nodes based on the Gauss quadrature nodes for the main rule. Once the main rule is initialized the nested nodes are readily available as shown in Figure~\ref{fig_init}. The weights for main and nested rules are simply considered as uniform values i.e. $\bm w_1 = {1}/{n_1}, \bm w_2 = {1}/{n_2}$.

\begin{remark}
For unbounded weights e.g. Gaussian weight function the optimal main rule has an order much smaller than the optimal Gauss quadrature order. That means the nodes are located in a much shorter range compared to original Gauss quadrature. To incorporate this effect we use the ratio between the maximum value of the Gaussian quadrature with $\alpha_2$ degree i.e. $\lfloor {(\alpha_2+1)/2} \rfloor$ nodes where $\lfloor . \rfloor $ is the floor function and the Gaussian quadrature with $n_2$ nodes. We use this ratio to shrink the range of Gaussian quadrature with $n_2$ nodes.
\end{remark}

Algorithm \ref{alg:quad-nested} summarizes our numerical method for nested quadrature, including the steps in Sections \ref{S3_3}--\ref{S3_2}.

\begin{algorithm}
\caption{Nested Quadrature Generation}\label{alg:quad-nested}
\label{alg1}
\begin{algorithmic}[1]
\State Initialize nodes and weights, $n_1$ for the nested nodes and $n_2=2n_1+1$ for the main nodes associated with the given orthogonal polynomial system cf. Section~\ref{S3_2}.
\State Specify the residual tolerance e.g. $\epsilon=10^{-12}$
\State Set $\alpha_2^{*}=0$
\While{ $||\widetilde{\bm{R}}||> \epsilon$}
\State Compute $\widetilde{\bm{R}}$ and $\widetilde{\bm{J}}$ using \eqref{eq:g-def}, \eqref{S3_1_0}, \eqref{S3_3_6}, and \eqref{eq:jacobian-entries}.
\State Determine the Tikhonov parameter $\lambda$ from the SVD of $\widetilde{\bm{J}}$
\State Compute the Newton step $\Delta \bm d$ from \eqref{S3_4_5}
\State Update the decision variables $\bm d^{k+1} = \bm d^{k} - \Delta \bm d$.
\State Compute the residual norm $||\widetilde{\bm{R}}||_2$ and Newton decrement $\eta$ from \eqref{S3_1_6}.
\If {$\eta < \epsilon$ and $||\widetilde{\bm{R}}||_2 \gg \epsilon$}
\State $\alpha_2 \gets \alpha_2-1$ and go to line $4$.
\EndIf
\EndWhile
\If {$\alpha_2 = \alpha_2^{*}$}
\State Return
\Else
\State $\alpha_2^{*} \gets \alpha_2$, $\alpha_2 \gets \alpha_2+1$ and go to line $4$
\EndIf
\end{algorithmic}
\end{algorithm}

\section{Numerical Examples}\label{sec:results}

\subsection{Univariate examples}
In this section we investigate the effectiveness of the nested quadrature rules on evaluating univariate integrals. Some of these experiments serve as validation where we verify existing nested rules. The remaining experiments focus on highlighting the computation of new rules using the general methodology we have introduced.

In addition to quadrature corresponding to Kronrod rules, we have also generated existing and new quadrature for Gauss-Kronrod-Patterson rules which we briefly describe at this juncture.

Patterson~\cite{Patterson68} extended Kronrod rule by generating nested sequences of univariate quadrature rules. In other words, a Kronrod-Patterson rule with accuracy level $i$ adds a number of points to the preceding level set $\mathbb{X}_{i}$ sequentially and updates the weights accordingly. As such this procedure results in $\mathbb{X}_{i} \subset \mathbb{X}_{i+1} \subset  \ldots$. These nodes have been specifically obtained for Legendre polynomials~\cite{Patterson68} and later in a similar fashion for Hermite polynomials~\cite{Genz96}. To investigate these rules we slightly amend our algorithm and generate some of these rules in addition to sequential nested rules for Chebyshev polynomials, which to our knowledge are new.

To generate Kronrod-Patterson sequential nested rules, we use an initial guess with fixed $n_1$ number of points and optimize $n_1+1$ additional points. The sequence starts with $n_1=1$ and $x_1^{(1)}=0$ which integrates $\alpha_1=1$ order. The next step is to add $1+1=2$ points and optimize their locations and weights (while the initial $1$ point is fixed) to achieve an optimal order e.g. $\alpha_2=5$ in the case of Legendre and Chebyshev. At this point the new rule with $n_2=3$ and $\alpha_2=5$ is considered as the nested rule for the next step where $3+1=4$ points are added to find a new optimal $7$-point rule. This procedure is continued for larger number of points.

It is noted that we only consider $\bm R_2 $ residual and its associated constraints' residual (since $\bm R_1 = \bm 0$ already) at each sequential optimization and compute $\bm R_2$ as a function of $(\bm x_2,\bm w_2)$. However, the Jacobian only includes the derivatives of $\bm R_2$ with respect to additional $\bm x_2\backslash\bm x_1$ nodes and the weights $\bm w_2$. As such the resulting Newton update has $n_2-n_1+n_2 = 3n_1+2$ entries. These values are concatenated to $n_1$ zeros to update the current iteration $(\bm x_2,\bm w_2)$.


\subsubsection{Verification of Existing Rules: Bounded Domains}\label{existing_rules_BD}
In our numerical experiments for integration on bounded domains we always find the Kronrod relationship given $n_1$, i.e.,
\begin{align}\label{eq:kronrod-relationship}
  n_2 &=2n_1+1, & \alpha_1 &= 2n_1-1, & \alpha_2 &= \left\{\begin{array}{rl} 3n_1+1, & \textrm{$n$ even} \\
                                                                             3n_1+2, & \textrm{$n$ odd}
\end{array}\right.
\end{align}
for all tested values of $n_1$.  We will consider three different polynomial families: Legendre, Chebyshev and asymmetric Jacobi. We report existing results on Legendre Polynomial in this section and show the results for Chebyshev and Jacobi in section~\ref{new_rules_BD}. Figure~\ref{leg15} shows the initial and final nodes and weights for the Legendre polynomial associated with $n_1=7, n_2=15, \alpha_1=13, \alpha_2=23$.

\begin{figure}[h]
\centering
\includegraphics[width=6in]{./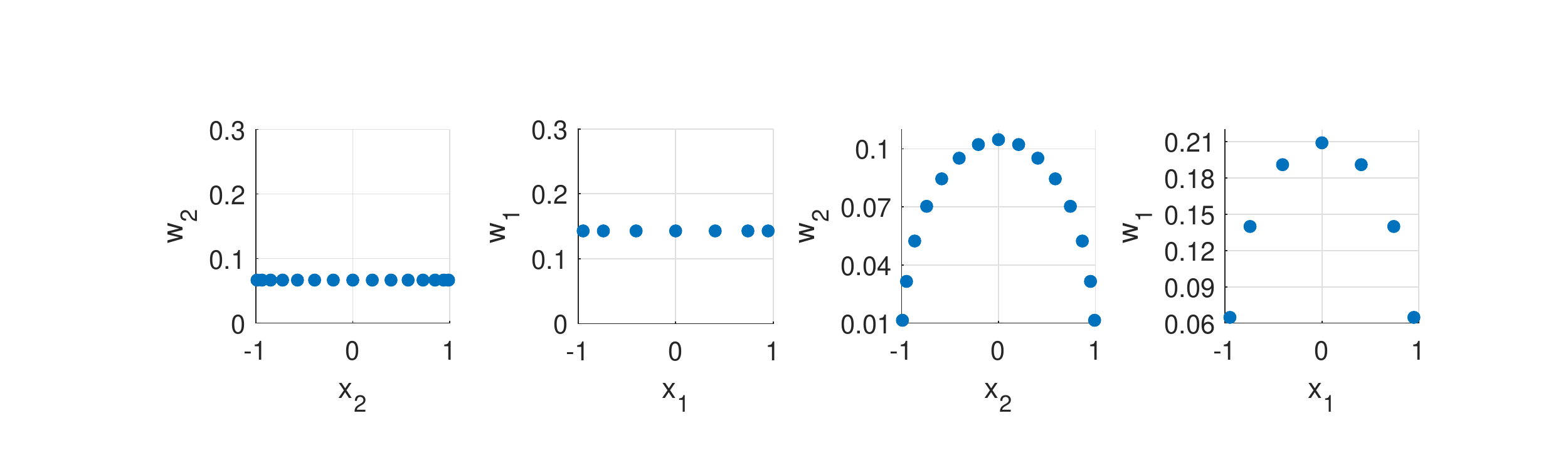}\\
\caption{\small{Initial nodes and weights (two left plots), final nodes and weights (two right plots) for Legendre polynomial associated with  $n_1=7, n_2=15, \alpha_1=13, \alpha_2=23$.}}\label{leg15}
\end{figure}

Figure~\ref{leg-patt} shows the sequence of nested quadrature for Gauss-Kronrod-Patterson rules. These rules are associated with $n_2=3,7,15,31,63$ and $\alpha_2=5,11,23,47,95$. Our algorithm found a lower order ($\alpha_2=91$ instead of $\alpha_2=95$) for the $63$-point rule which we report in the next section however we were able to recover these existing points when we used an initial guess close to the optimal points reported in~\cite{Qsparse}.
\begin{figure}[h]
\centering
\includegraphics[width=6in]{./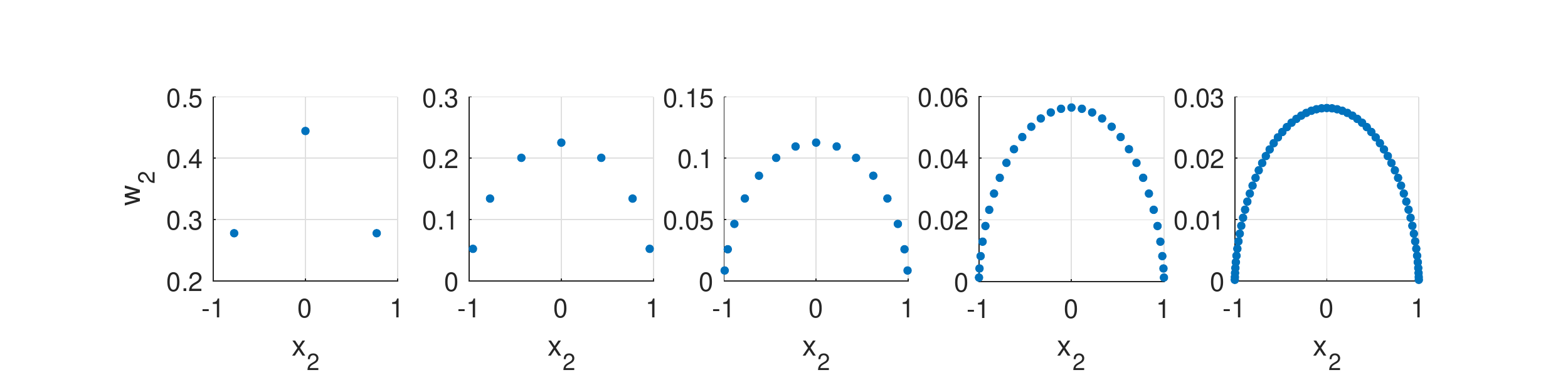}\\
\caption{\small{Optimized nested sequence of quadrature rules $n_2=3,7,15,31,63$ for Legendre polynomial with orders $\alpha_2=5,11,23,47,95$.}}\label{leg-patt}
\end{figure}

\subsubsection{Verification of Existing Rules: Unbounded Domains}\label{existing_rules_UBD}

To generate existing rules for unbounded domains we consider the sequence of nested rules associated with the Gaussian weight function $\omega(x) \varpropto e^{-x^2}$. We generate $n_2=3,9,19,35$ nodes corresponding to polynomial orders $\alpha_2=5,15,29,51$  for tolerance $\epsilon=10^{-12}$. For $n_2=19,35$ we used initial guesses close to those reported in~\cite{Qsparse} and were able to recover similar nodes cf. Figure~\ref{herm-patt}. The computed $19$-point rule has two (symmetric) nodes with negative weights. To recover this rule we relaxed our optimization with no constraint on positive weights. This rule is the only quadrature with negative weights throughout this paper and we generated it to solely verify the existing rule.

\begin{figure}[h]
\centering
\includegraphics[width=6in]{./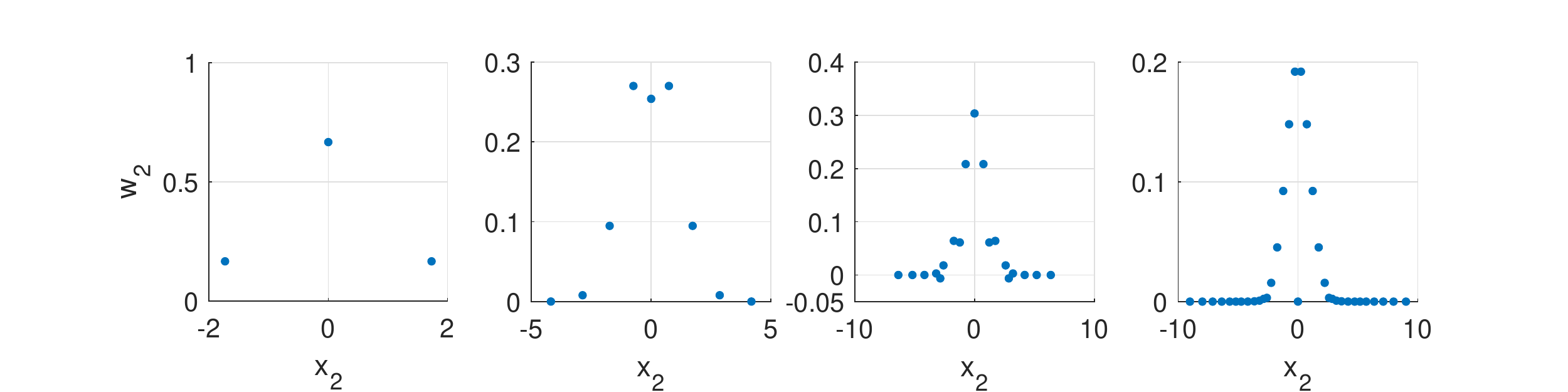}\\
\caption{\small{Optimized nested sequence of quadrature rules $n_2=3,9,19,35$ for weight function $\omega(x) \varpropto e^{-x^2}$.}}\label{herm-patt}
\end{figure}

\subsubsection{Generation of New Rules: Bounded Domains}\label{new_rules_BD}

The optimal quadrature for high polynomial order (and high number of nodes) i.e. $n_1=100,n_2=201, \alpha_1=199,\alpha_2=301$ for both cases of Legendre and Chebyshev is shown in Figure~\ref{leg-cheb}. The MATLAB scheme takes $173$ and $261$ iterations which amount to $36.17~sec$ and $53.18~sec$ respectively for these cases on a personal desktop computer with Intel Core $i7-5930K~ @3.5~ GHz$ CPU.

\begin{figure}[h]
\centering
\includegraphics[width=5in]{./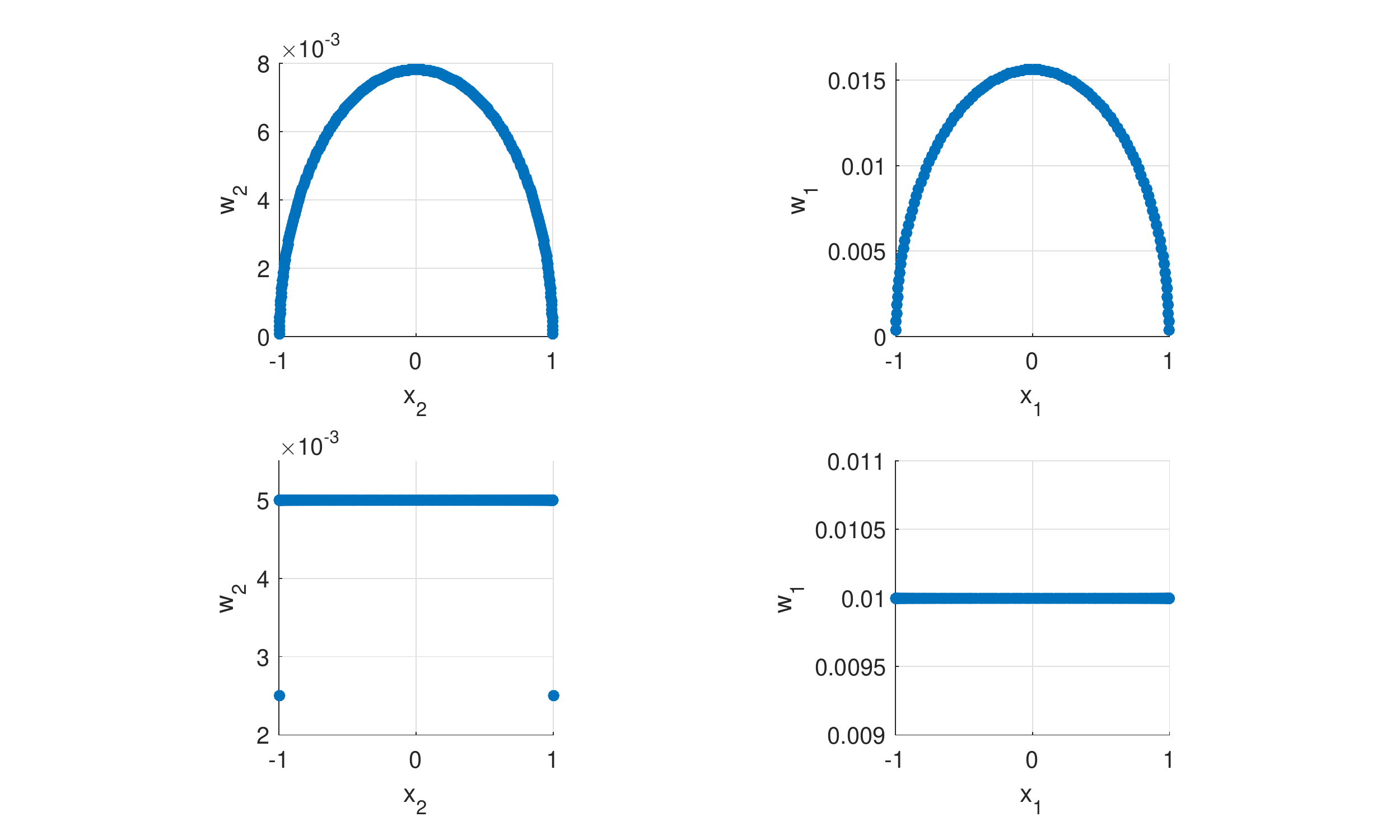}\\
\caption{\small{Optimal main and nested nodes and weights for Legendre (top) and Chebyshev (bottom) polynomials associated with  $n_1=100, n_2=201, \alpha_1=199, \alpha_2=301$.}}\label{leg-cheb}
\end{figure}

The experiment for asymmetric Jacobi is associated with $\alpha=0, \beta=0.3$. Figure~\ref{jac} shows the optimal nodes and weights for $n_1=10$ and $n_1=100$. Again we emphasize that we find the Kronrod rule cf. Equation~\eqref{eq:kronrod-relationship}.


\begin{figure}[h]
\centering
\includegraphics[width=6in]{./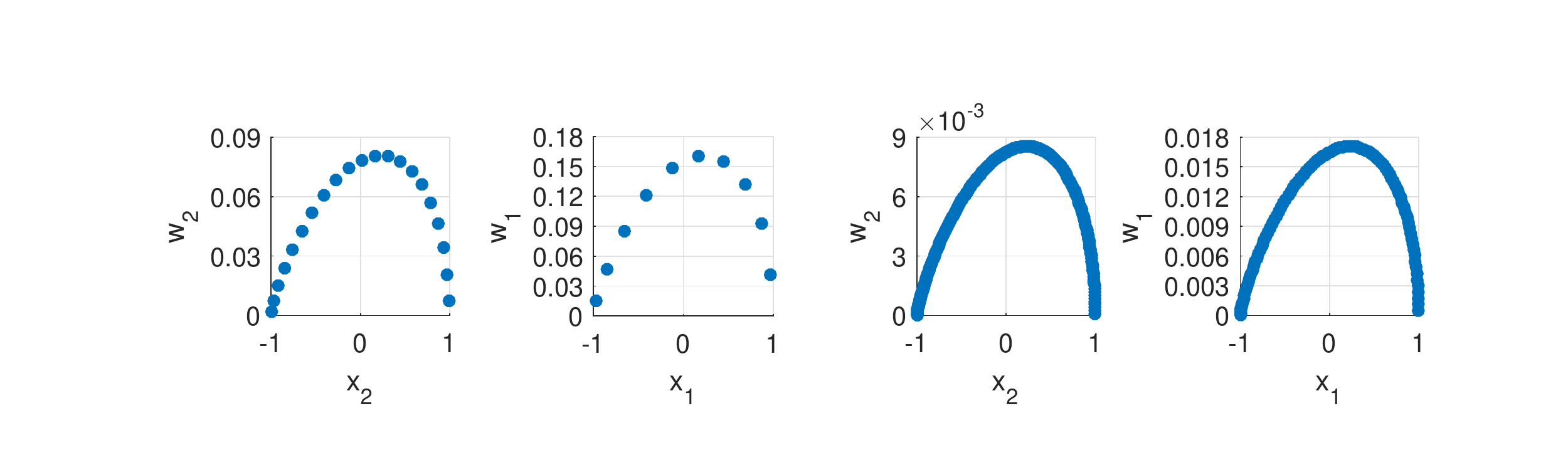}\\
\caption{\small{Optimal main and nested rules for asymmetric Jacobi with $\alpha=0, \beta=0.3$ for $n_1=10, n_2=21, \alpha_1=19, \alpha_2=31$ (two left figures) and $n_1=100, n_2=201, \alpha_1=199, \alpha_2=301$ (two right figures).}}\label{jac}
\end{figure}

Now that we have quadrature nodes for high polynomial order we can test the well-documented \textit{Circle Theorem}. The theorem states that the Gaussian weights, suitably normalized and plotted against the Gaussian nodes, lie asymptotically for large orders on the upper half of the unit circle centered at the origin in the case of Jacobi weight functions~\cite{gautschi_circle_2006}. In other words,
\begin{equation*}
\frac{nw}{\pi \omega(x)} \sim \sqrt{1-x^2} \quad \textrm{as} \quad n \rightarrow~\infty
\end{equation*}
where $x,w$ are the quadrature node and weight and $\omega(x)$ is the weight function evaluated at node $x$. Figure~\ref{circle_leg} shows the above relationship for both main $n_2=201$ and nested $n_1=100$ rules associated with Legendre polynomial cf. Figure~\ref{leg-cheb}.

\begin{figure}[h]
\centering
\includegraphics[width=4in]{./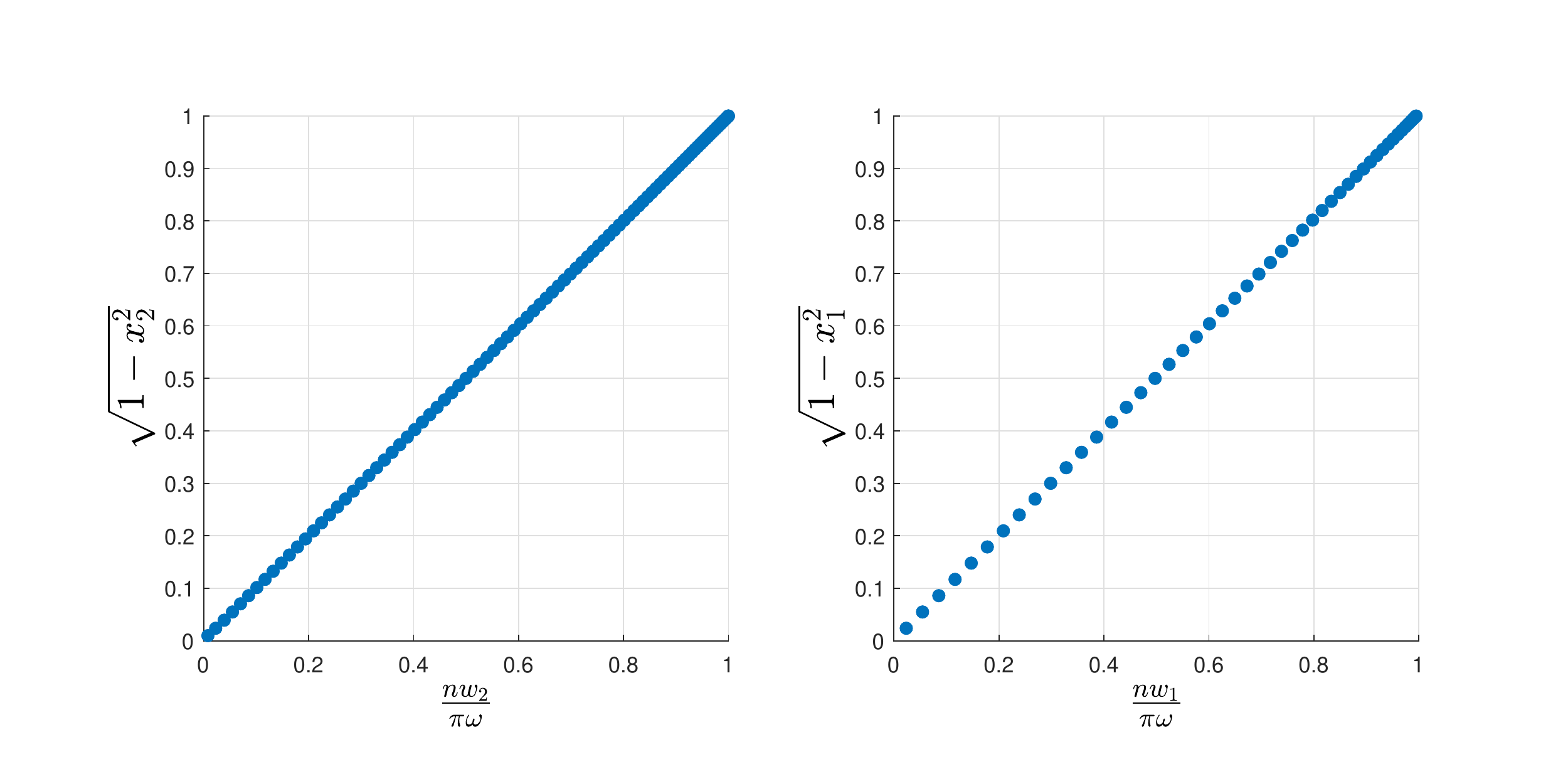}\\
\caption{\small{Circle Theorem for main and nested rule associated with Legendre polynomial.}}\label{circle_leg}
\end{figure}

%

The Gauss-Kronrod-Patterson rule associated with the Legendre polynomial is shown in Figure~\ref{leg-patt_91}. As mentioned earlier we find the nodes for highest order in this case without using the initial guess that we used to generate nodes in Figure~\ref{leg-patt}.

\begin{figure}[h]
\centering
\includegraphics[width=6in]{./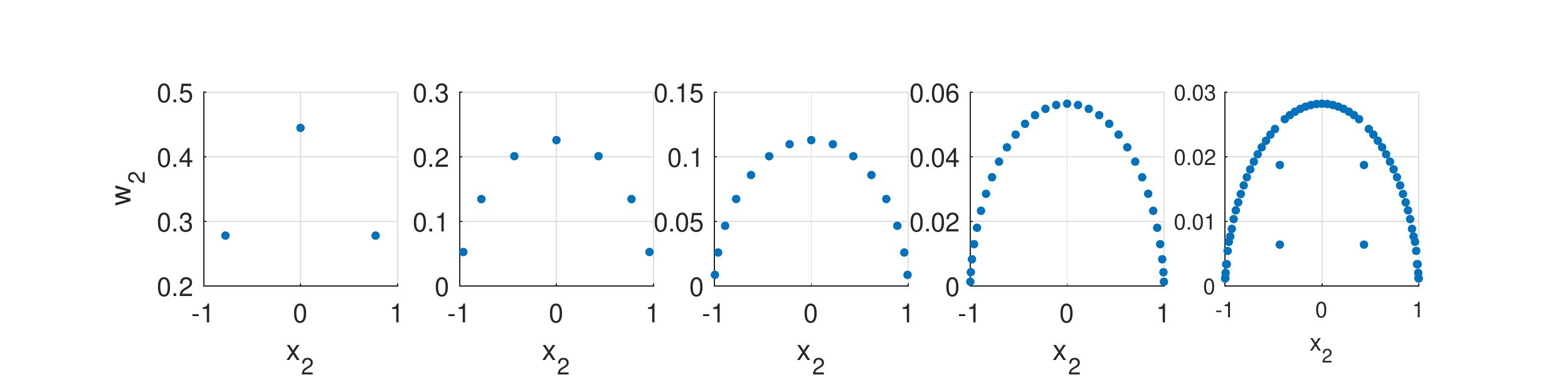}\\
\caption{\small{Optimized nested sequence of quadrature rules $n_2=3,7,15,31,63$ for Legendre polynomial with orders $\alpha_2=5,11,23,47,91$.}}\label{leg-patt_91}
\end{figure}

We also used our method to generate a sequence of nested points for quadrature under the Chebyshev weight. We generated these points for tolerance $\epsilon=10^{-12}$ which attains polynomial accuracy similar to the Legendre case, cf. Figure~\ref{cheb-patt}. For the last rule we started with $63$ points, and we found after optimization that six of these points have negligible weights ($w \sim 10^{-15}$ which is comparable to our constraint tolerance). These points were automatically flagged for removal, so the final set has $57$ points. The residual norm with these $57$ points is $||\bm R_2||_2=1.51e-14$.

\begin{figure}[h]
\centering
\includegraphics[width=6in]{./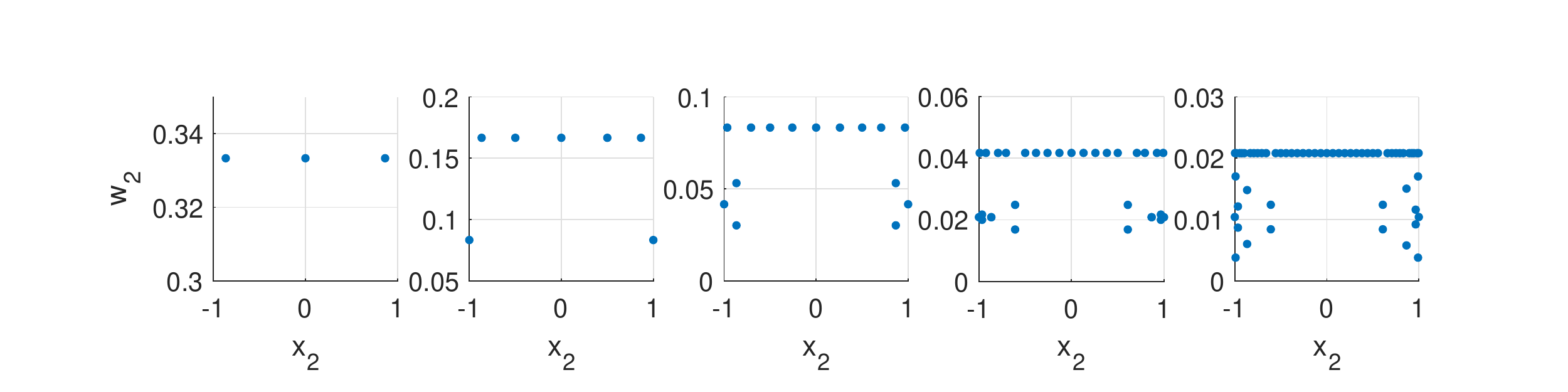}\\
\caption{\small{Optimized nested sequence of quadrature rules $n_2=3,7,15,31,57$ for Chebyshev polynomial with orders $\alpha_2=5,11,23,47,95$.}}\label{cheb-patt}
\end{figure}

\subsubsection{Generation of New Rules: Unbounded Domains}\label{new_rules_UBD}

We now use our numerical method to find nested rules for polynomial families whose orthogonality measure has support on infinite, domains such as the Hermite and Laguerre families.

The weight functions for these two cases are $\omega(x) \varpropto x^{\rho_G} e^{-x^2}$ and $\omega(x)\varpropto{x^{\rho_L}}e^{-x}$. It is easy to show that these two weight functions are transformable to each other with $x_L = x^2_G$ where $x_L$ and $x_G$ denote the domain for weight functions associated with Laguerre and Hermite polynomials. Using this transformation we can show $\rho_L = (\rho_G-1)/2$. Having this transformation and having the ability to generate quadrature for any $\rho_L$ or $\rho_G$ one only needs to generate quadrature points for one of these families. For example generating an $n$-point rule where $n$ is even for Hermite families is equivalent to generating an $n/2$-point rule for Laguerre families. However, it should be noted that the maximum integrable order for Laguerre is half of the Hermite case due to the $x_L = x^2_G$ transformation.


We find in our numerical experiments that the relationship between the number of points and the polynomial accuracy of the rule does not attain the accuracy of a Kronrod rule, i.e., given $n_1$ we do not achieve the parameter $n_2$, $\alpha_1$, and $\alpha_2$ specified in \eqref{eq:kronrod-relationship}. Table~\ref{tabNE_Hermite} lists different values of the number of nodes and polynomial orders for successful cases i.e. cases that achieve tolerance less than $10^{-14}$. The optimization for highest case in this table $n_1=15$ required $1755$ iterations and $5.88~sec$ to achieve the desired tolerance. The results in the table demonstrate that in many cases the accuracy $\alpha_2$ of the main rule is smaller than $3n_1+1$ or $3n_2+2$.
\begin{table}[!h]
\caption{Nested rule for Hermite polynomials with $\rho_G=0$}
\centering
\begin{tabular}{ c c c| c c| c c| c  c c}
\hline\hline
& $i=1$ & $i=2$  & $i=1$ & $i=2$&  $i=1$ & $i=2$ & $i=1$ & $i=2$\\
\hline
$n_i$  & 1 & 3  & 2 & 5 & 3 & 7 & 4 & 9 \\
$\alpha_i$  & 1 & 5  & 3 & 7 & 5 & 9 & 7 & 11\\
\hline\hline
$n_i$   & 5 & 11   & 6 & 13 & 7 & 15  & 8 & 17 \\
$\alpha_i$   & 9 & 15 & 11 & 17 & 13 & 19 & 15 & 21\\
\hline\hline
$n_i$   & 9 & 19 & 10 & 21 & 11 & 23  & 12 & 25  \\
$\alpha_i$   & 17 & 23 & 19 & 25 & 21 & 27 & 23 & 31  \\
\hline\hline
$n_i$  & 13 & 27   & 14 & 29  & 15 & 31\\
$\alpha_i$  & 25 & 33 & 27 & 35 & 29 & 37\\
\hline\hline
\end{tabular}
\label{tabNE_Hermite}
\end{table}

For cases with large number of nodes however we find that we can achieve higher order while maintaining a small tolerance, e.g., $\epsilon=10^{-14}$. This might be attributed to the large number of degrees of freedom in this optimization as well as extremely small weights on the tails. Figure~\ref{herm_high_low} shows the optimal nodes and weights for two cases of a small number of nodes $n_1=15, n_2=31, \alpha_1=29, \alpha_2=37$ and high number of nodes $n_1=100, n_2=201, \alpha_1=199, \alpha_2=301$ for Hermite polynomial. The result for  $n_1=100$ takes $1977$ iterations and $177.32~sec$. With our algorithm we can similarly repeat this experiment for $\rho_G=1$ where we have shown the results in Figure~\ref{herm_high_low_rho1}.


\begin{figure}[h]
\centering
\includegraphics[width=6in]{./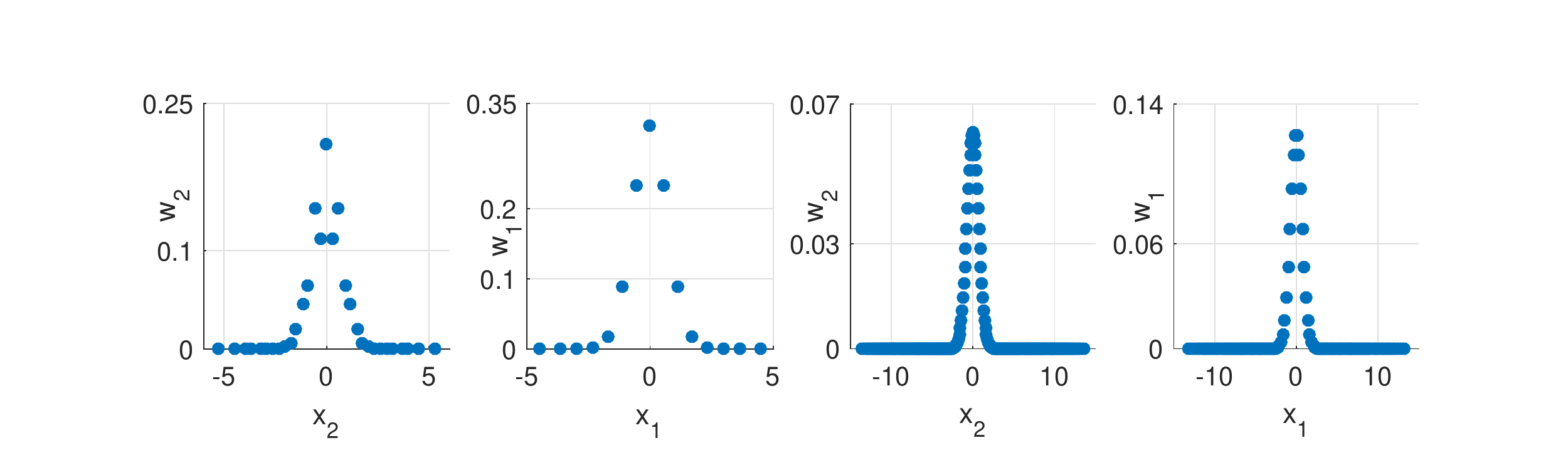}\\
\caption{\small{Optimal main and nested rules for Hermite for $n_1=15, n_2=31, \alpha_1=29, \alpha_2=37$ (two left figures) and $n_1=100, n_2=201, \alpha_1=199, \alpha_2=301$ (two right figures) with $\rho_G=0$.}}\label{herm_high_low}
\end{figure}

\begin{figure}[h]
\centering
\includegraphics[width=6in]{./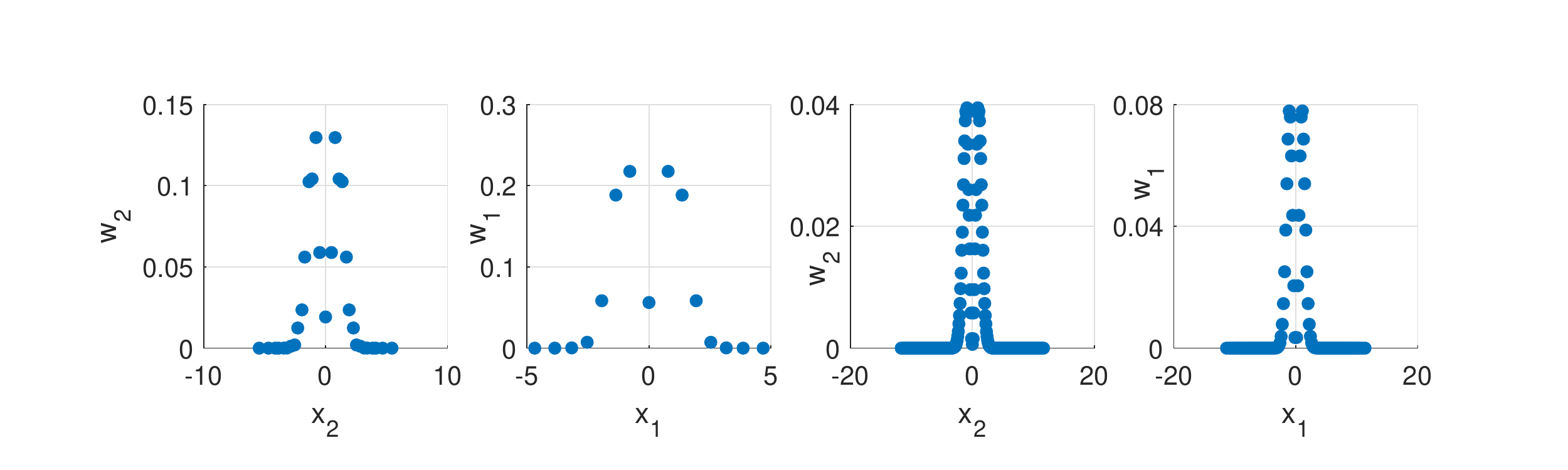}\\
\caption{\small{Optimal main and nested rules for Hermite for $n_1=15, n_2=31, \alpha_1=29, \alpha_2=37$ (two left figures) and $n_1=100, n_2=201, \alpha_1=199, \alpha_2=301$ (two right figures) with $\rho_G=1$.}}\label{herm_high_low_rho1}
\end{figure}


As discussed previously, we use the half of the optimized Hermite rule as the Laguerre rule.  Figure~\ref{lag} shows the optimized Hermite rule $n_1=100, n_2=202, \alpha_1=199, \alpha_2=301$ and the Laguerre rule $n_1=50, n_2=101, \alpha_1=99, \alpha_2=150$ which takes $56.21~sec$ and $3253$ iterations to achieve a residual norm $||\tilde{\bm R}||_2\simeq10^{-14}$. Note that $n_2$ is even in the case of Hermite rule and the accuracy of Laguerre rules are halved. It is also noted that the absicca for these rules are according to $x_L=x^2_G$.


\begin{figure}[h]
  \begin{center}
    \includegraphics[width=\textwidth]{./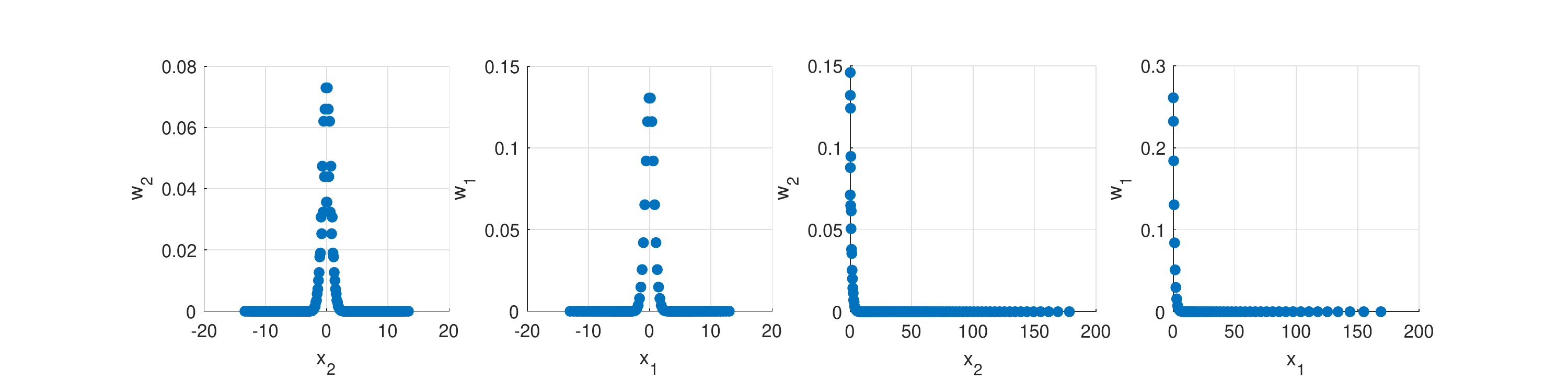}\\
    \includegraphics[width=\textwidth]{./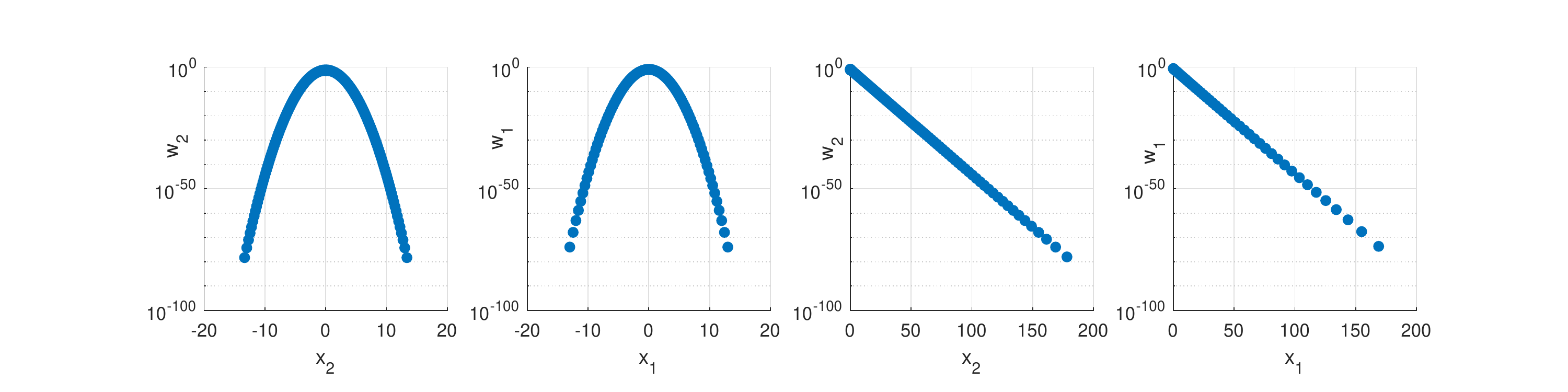}
  \end{center}
\caption{\small{Optimized Hermite rule for $n_1=100, n_2=202, \alpha_1=199, \alpha_2=301$ (two left figures) and  Laguerre nested rules for $n_1=50, n_2=101, \alpha_1=99, \alpha_2=150$ (two right figures). The top and bottom figures show a linear and logarithmic scale, respectively, for the vertical axis.}}\label{lag}
\end{figure}

Finally we generate the Gauss-Kronrod-Patterson rule for Hermite polynomials with $\rho_G=1$. The achieved orders for $n_2=3,7,15,31$ are $\alpha_2=5,9,15,35$ respectively. We use these nodes in Section~\ref{S4_1_5} to generate a Sparse Grid for integration in multiple dimensions.

\begin{figure}[h]
\centering
\includegraphics[width=6in]{./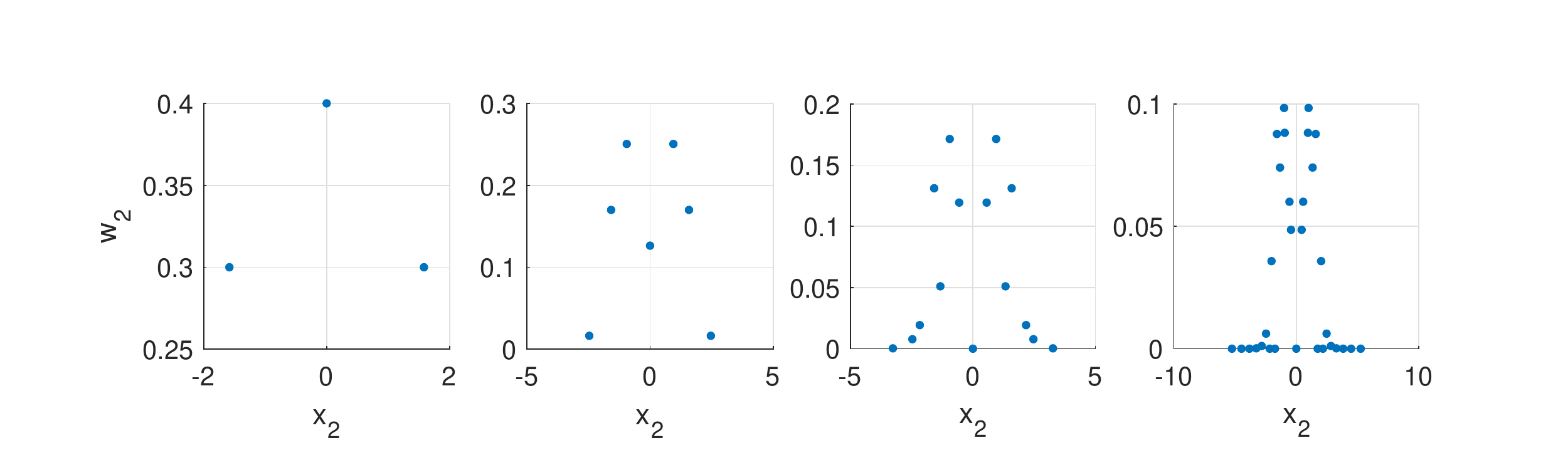}\\
\caption{\small{Optimized nested sequence of quadrature rules $n_2=3,7,15,31$ and $\alpha_2=5,9,15,35$ for Hermite polynomials with $\rho_G=1$.}}\label{leg-patt}
\end{figure}

\subsubsection{Univariate Integration: Linear Elastic Problem}\label{S4_1_4}

We investigate the accuracy of our nested quadrature rule via estimation of statistical moments for the displacement in a linear elastic structure with uncertain material properties. The L-bracket domain shown in Figure~\ref{fig_2_2} is partitioned into $978$ standard triangular elements that are used to discretize a linear elastic PDE that predicts displacement. The modulus of elasticity is parameterized as one lognormal variable $E=10^{-6}+\exp({\xi})$ for all elements. We are interested in the displacement, $u$, in the direction of point load, see Figure \ref{fig_2_2}. This displacement is a function of the elasticity, so that $u = u(\xi)$, where $\xi$ is taken as a standard normal random variable. We estimate the mean and variance of $u(\xi)$ by generating nested quadrature rules in the $\xi$ variable and evaluating $u$ at the abscissae of these rules. Each evaluation of $u$ requires the solution of a PDE, so this is an example of a case where parsimony of quadrature rules sizes is useful. The mean and variance of $u$ are estimated via nested quadrature rules and are compared against ``true'' values, which are computed using a $100$-point Gaussian quadrature rule.

For a given quadrature rule $(\xi_i, w_i)_{i=1}^n$, the mean and standard deviation are estimated as
\begin{equation}\label{eq_m_s}
\begin{array}{l l}
\mu=\displaystyle \sum_{i=1}^n u(\xi_i) w_i, & \sigma=\displaystyle \sqrt{ \displaystyle \sum_{i=1}^n u^2(\xi_i) w_i - \mu^2}

\end{array}
\end{equation}
Subsequently the errors in mean and standard deviation are obtained as
\begin{equation}\label{eq_rel_err}
e_{\mu}=|(\mu-\mu_{true})/\mu_{true}|, \quad  e_{\sigma}=|(\sigma-\sigma_{true})/\sigma_{true}|
\end{equation}
where $\mu_{true}$ and $\sigma_{true}$ are the ``true" mean and standard deviation. We also compute the error between the main and nested rule evaluations as
\begin{equation}\label{I_rel_err}
e_{I}=|(\mu_1-\mu_2)/\mu_{2}|
\end{equation}
where $\mu_1$ and $\mu_2$ are mean values evaluated with nested and main rule respectively.

\begin{figure}[h]
\centering
\includegraphics[width=3.50in]{./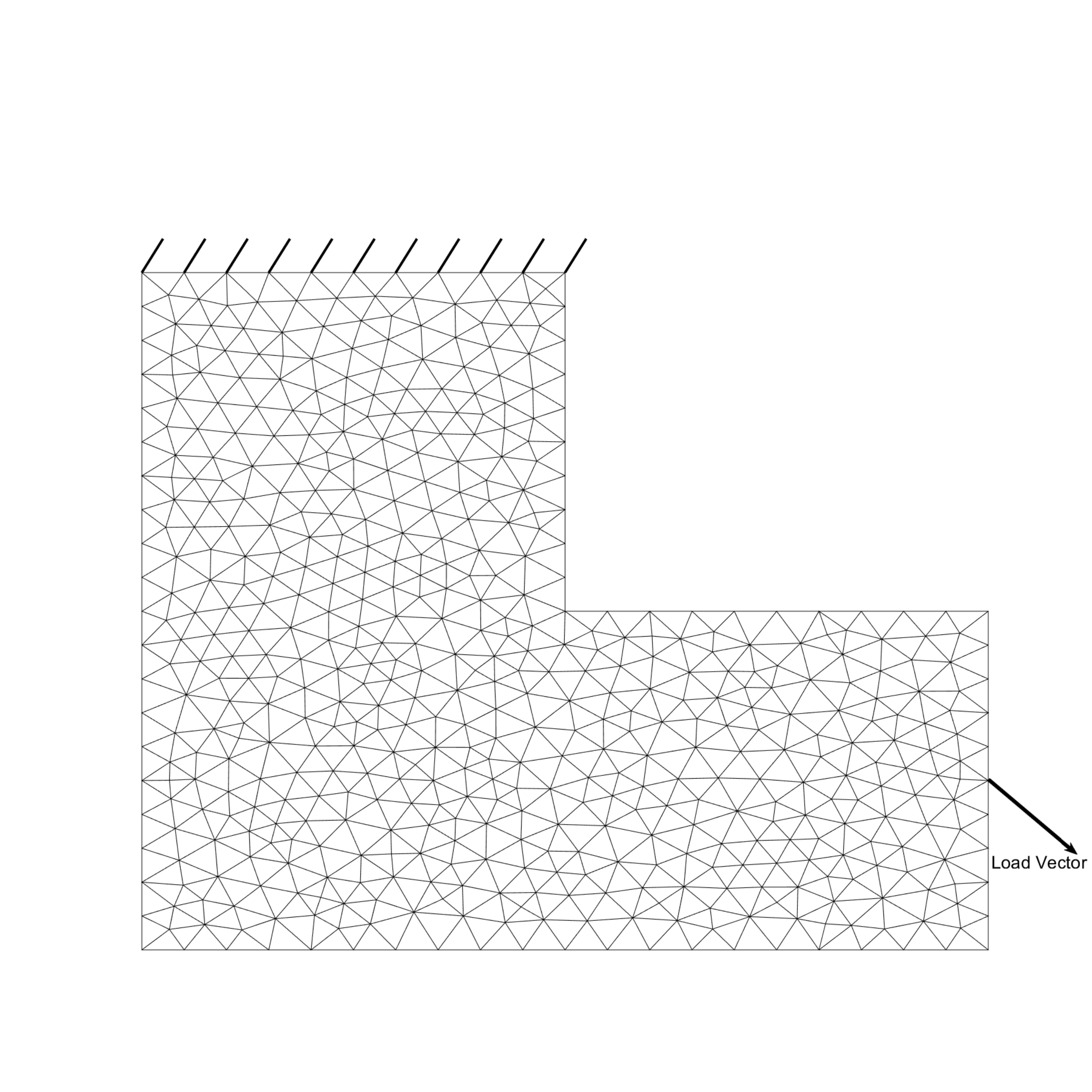}\\ \vspace{-0.5cm}
\caption{\small{Linear elastic L-bracket with random Young's modulus}}\label{fig_2_2}
\end{figure}

The Poisson's ratio is $\nu=0.3$, and the plane stress condition is assumed. We consider two sets for our analysis: i) a nested rule with $n_1=7, n_2=15$ set and compare with an $15$-point Gauss quadrature which is the total number of points and ii) a nested rule with $n_1=8,n_2=17$ and compare with $17$-point Gauss quadrature. We choose the Gauss quadrature sets such that their number is equal to $n_2$.

It is evident from Table~\ref{tabNE_Hermite} that the main ($n_2$-point) quadrature rules have less accuracy compared to $n_2$-point Gaussian quadrature. This is expected since Gaussian rules integrate higher-degree polynomials exactly. However, we see that the nested quadrature rules attain comparable accuracy for $e_\mu$ and $e_I$, which thus supports usage of these rules in cases when re-use of function evaluations is paramount.


\begin{table}[!h]
\caption{Error in mean and standard deviation for the linear elastic structure}
\centering
\begin{tabular}{ c c c c c}
\hline\hline
Quadrature rule & $e_{\mu}$ & $e_{\sigma}$  & number of nodes & $e_{I}$ \\
\hline

Nested nodes  & 4.375e-06&8.757e-03 & 7 & \multirow{ 2}{*}{4.3752e-06}\\
Main nodes  & 5.745e-10 &	8.240e-05  & 7$\oplus$8=15&\\
Gauss quadrature & 4.9141e-12&	1.2708e-08  & 15&\\
\hline
Nested nodes  & 2.918e-07&	2.333e-03  & 8& \multirow{ 2}{*}{2.9185e-07}\\
Main nodes  & 2.550e-11&	1.485e-05  & 8$\oplus$9=17&\\
Gauss quadrature & 3.6890e-12&	2.1421e-10 & 17&\\
\hline\hline
\end{tabular}
\label{tabNE_Hermite}
\end{table}

\subsection{Multivariate examples via sparse grids}

We compute multivariate integration formulas via sparse grids, which manipulate univariate quadrature rules to form a multivariate quadrature rule. The ability to generate nested univariate quadrature rules, which is the main topic of this paper, yields sparse grid constructions that have a relatively small number of function evaluations. This idea is not new, but our procedure affords flexibility: we can generate nested rules for quite general univariate weight functions. We demonstrate the savings using this strategy on some test cases.

\subsubsection{Sparse Grids for Multivariate Quadrature}

Sparse grids are multivariate quadrature rules formed from unions of tensorized univariate rules. Consider a tensorial $\Gamma$ as in Section \ref{sec_notation}, and for simplicity assume that the univariate domains $\Gamma_j = \Gamma_1$ and  weights $\omega_j = \omega_1$ are the same. Let $\mathbb{X}_{i}$ denote a univariate quadrature rule (nodes and weights) of ``level" $i \geq 1$, and define $\mathbb{X}_0 = \emptyset$. The number of points $n_i$ in the quadrature rule $\mathbb{X}_i$ is increasing with $i$, but can be freely chosen. For multi-index $\bm{i} \in \N^d$, a $d$-variate tensorial rule and its corresponding weights are
\begin{equation}
\label{SP1} \displaystyle \mathbb{A}_{d,\bm i}= \mathbb{X}_{i_1} \otimes \ldots \otimes \mathbb{X}_{i_d}, \quad \displaystyle w^{(\bm q)}= \prod_{r=1}^d w_{i_r}^{(q_r)}.
\end{equation}
The difference between sequential univariate levels is expressed as
\begin{align}\label{SP2}
  \Delta_i & = \mathbb{X}_{i} - \mathbb{X}_{i-1}, & i &\geq 1,
\end{align}
This approximation difference is used to construct a $d$-variate, level-$k$-accurate sparse grid operator \cite{Bungartz04,Smol63} for any $k \in \N$ as,
\begin{align}\label{SP3}
  \mathbb{A}_{d,k} = \sum_{r=0}^{k-1} \sum_{\substack{\bm{i} \in \N^d \\\left| \bm{i} \right| = d+r}} \Delta_{i_1} \otimes \ldots \otimes \Delta_{i_d}
   = \sum_{r=k-d}^{k-1} (-1)^{k-1-r}  \binom{d-1}{k-1-r} \displaystyle \sum_{\substack{\bm{i} \in \N^d \\\left| \bm{i} \right| = d+r}} \mathbb{X}_{i_1} \otimes \ldots
\otimes \mathbb{X}_{i_d},
\end{align}
where the latter equality is shown in \cite{Wasilkowski95}.

If the univariate quadrature rule $\mathbb{X}_i$ exactly integrate univariate polynomials of order $2i-1$ or less, then the Smolyak rule $\mathbb{A}_{d,k}$ is exact for $d$-variate polynomials of total order $2k-1$ \cite{Heiss08}. It is reasonable to use Gauss quadrature rules for the $\mathbb{X}_i$ to obtain optimal efficiency, but since the differences $\Delta_i$ appear in the Smolyak construction, then utilizing nested rules satisfying $\mathbb{X}_i \subset \mathbb{X}_{i+1}$ can generate sparse grids with many fewer nodes than non-nested constructions. One can use, for example, nested Clenshaw-Curtis rules \cite{xiu_high-order_2005}, the nested Gauss-Patterson or Gauss-Kronrod rules \cite{Patterson68,liu_adaptive_2011,gerstner_numerical_1998}, or Leja sequences~\cite{narayan_adaptive_2014}.

Sparse grids is a popular rule for integration in many computational applications. The main reason is the easy construction of multidimensional rule from a univariate rule while yielding small number of points. As mentioned sparse grid construction results in fewer nodes by using nested univariate rules. Another alternative to sparse grid for integration in multi-dimensions is the \textit{designed quadrature} which directly satisfies moment-matching conditions for multidimensional polynomial spaces, guarantees all positive weights and has been shown to use far fewer nodes for integration for the same level of accuracy~\cite{Keshavarzzadeh_DQ2017}.

Our multidimensional sparse grid rules are constructed via \eqref{SP3}, but with tensorized quadrature rules formed via $\mathbb{X}_i$ that only approximately integrate polynomials. I.e., the univariate rules $\mathbb{X}_i$ only integrate polynomials up to the accurate certified by $\|\bs{R}\|_2 \leq \epsilon$ from the optimization \eqref{S3_3_3}. This univariate error translates into an error committed for multivariate quadrature rules. For simplicity, we state this result for a tensorial probability density function with identical univariate marginals.
\begin{proposition}
  Assume $\omega(x)$ is a univariate probability density function. Let $\mathbb{X}_i$, $i = 1, \ldots, $ be a sequence of univariate quadrature rules, and for each $i$ assume that the residual vector $\bs{R}$ defined in \eqref{S3_1_0} and \eqref{eq:R-def} satisfies $\|\bs{R}\|_2 < \epsilon$, where the residual vector for $\mathbb{X}_i$ is associated with a univariate polynomial space $\Pi_{\alpha_i}$. Then, given some multi-index $\bs{i} \in \N_0^d$, we have for any $p \in \otimes_{q=1}^d \Pi_{\alpha_{i_q}}$,
  \begin{align*}
    \left| \mathbb{A}_{d,\bs{i}}(p) - I(p) \right| \leq \epsilon \|p\| d (1+\epsilon)^{d-1} \prod_{q=1}^d \sqrt{\alpha_{i_q}+1},
  \end{align*}
  where
  \begin{align*}
    I(p) \coloneqq \int_{\Gamma} p(\bs{x}) \left(\prod_{q=1}^d \omega(x_q)\right) \dx{x_1} \cdots \dx{x_d},
  \end{align*}
  and $\|p\|$ is the $\prod_{q=1}^d \omega(x_q)$-weighted $L^2(\Gamma)$ norm.
\end{proposition}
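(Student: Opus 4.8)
The plan is to reduce the $d$-dimensional error to a sum of one-dimensional errors by a telescoping identity, so the first ingredient I would establish is a univariate estimate. Fix a single level with polynomial space $\Pi_\alpha$, let $Q(f)=\sum_q w^{(q)} f(x^{(q)})$ be the associated univariate quadrature functional, and write $\mathcal{I}(f)=\int_\Gamma f\,\omega\,dx$ for the univariate integral of \eqref{eq:I-def}. Because $\omega$ is a probability density we have $b_0=1$, hence $p_0\equiv 1$ and $\int_\Gamma p_j\,\omega\,dx=\delta_{j,0}$; consequently the entries of the residual $\bs R$ of \eqref{S3_1_0} are precisely $R_j=Q(p_j)-\mathcal{I}(p_j)$ for $j=0,\dots,\alpha$. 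Expanding any $f\in\Pi_\alpha$ in the orthonormal basis as $f=\sum_{j=0}^{\alpha}c_j p_j$, Parseval gives $\|\bs c\|_2=\|f\|$ and linearity gives $Q(f)-\mathcal{I}(f)=\bs c^{\top}\bs R$. A direct Cauchy--Schwarz then yields $|Q(f)-\mathcal{I}(f)|\le\|\bs c\|_2\|\bs R\|_2\le\epsilon\|f\|$, while the cruder $\ell^1$--$\ell^2$ bound $|\bs c^{\top}\bs R|\le\|\bs c\|_1\|\bs R\|_\infty\le\sqrt{\alpha+1}\,\|\bs c\|_2\|\bs R\|_2$ gives $|Q(f)-\mathcal{I}(f)|\le\epsilon\sqrt{\alpha+1}\,\|f\|$. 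I will use the first (sharp) form to control quadrature operators and the second form to surface the $\sqrt{\alpha+1}$ factors of the claimed bound.

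Next I would record two operator consequences. Since $\int_\Gamma\omega\,dx=1$, Cauchy--Schwarz shows univariate integration is an $L^2$-contraction, $|\mathcal{I}(f)|\le\|f\|$, and therefore $|Q(f)|\le|\mathcal{I}(f)|+|Q(f)-\mathcal{I}(f)|\le(1+\epsilon)\|f\|$ on $\Pi_\alpha$. I then lift both to partial (coordinatewise) applications: writing $\mathcal{I}_q,Q_q$ for integration and quadrature acting in the $q$-th coordinate, Fubini gives $\|\mathcal{I}_q F\|\le\|F\|$ and $\|Q_q F\|\le(1+\epsilon)\|F\|$ in the $L^2$ norm over the remaining coordinates, provided $F$ is a polynomial of degree at most $\alpha_{i_q}$ in the coordinate being acted on. The key structural observation is that neither $\mathcal{I}_q$ nor $Q_q$ raises the per-coordinate degree in the other variables, so starting from $p\in\bigotimes_{q=1}^d\Pi_{\alpha_{i_q}}$ every intermediate function still has degree at most $\alpha_{i_r}$ in each surviving coordinate $r$, and the univariate estimate remains applicable at every stage.

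The heart of the argument is the tensor telescoping identity
\begin{equation*}
  \mathbb{A}_{d,\bs i}-I \;=\; \bigotimes_{q=1}^d Q_q-\bigotimes_{q=1}^d \mathcal{I}_q \;=\; \sum_{q=1}^d \Bigl(\bigotimes_{r=1}^{q-1} Q_r\Bigr)\otimes(Q_q-\mathcal{I}_q)\otimes\Bigl(\bigotimes_{r=q+1}^{d}\mathcal{I}_r\Bigr),
\end{equation*}
which holds because operators acting on distinct coordinates commute. For the $q$-th summand applied to $p$ I would estimate from the inside out: the $d-q$ factors $\mathcal{I}_r$ with $r>q$ contract the $L^2$ norm to at most $\|p\|$; the central factor $(Q_q-\mathcal{I}_q)$ contributes, by the crude univariate estimate applied coordinatewise together with Fubini, a factor $\epsilon\sqrt{\alpha_{i_q}+1}$; and the $q-1$ factors $Q_r$ with $r<q$ each enlarge the norm by at most $(1+\epsilon)$. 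Hence the $q$-th summand is bounded by $(1+\epsilon)^{q-1}\epsilon\sqrt{\alpha_{i_q}+1}\,\|p\|$. Summing over $q$ and using $(1+\epsilon)^{q-1}\le(1+\epsilon)^{d-1}$ together with $\sqrt{\alpha_{i_q}+1}\le\prod_{r=1}^d\sqrt{\alpha_{i_r}+1}$ (each factor being at least $1$) gives
\begin{equation*}
  \bigl|\mathbb{A}_{d,\bs i}(p)-I(p)\bigr| \;\le\; \sum_{q=1}^d (1+\epsilon)^{q-1}\epsilon\sqrt{\alpha_{i_q}+1}\,\|p\| \;\le\; \epsilon\,\|p\|\,d\,(1+\epsilon)^{d-1}\prod_{q=1}^d\sqrt{\alpha_{i_q}+1},
\end{equation*}
which is exactly the asserted inequality.

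I expect the main obstacle to be the bookkeeping inside the telescoped terms rather than any single hard estimate: one must check that at each partial application the intermediate function is genuinely a polynomial of the admissible degree in the coordinate being integrated or quadratured, so the univariate estimate legitimately applies, and that the $L^2$ norm truly contracts through every $\mathcal{I}_r$ and grows by at most $(1+\epsilon)$ through every $Q_r$. These are precisely the partial-operator $L^2$ bounds of the second step, each an application of Fubini to the corresponding one-dimensional inequality; once they are in place, the ordering of the factors (integrations innermost, quadratures outermost) and the final summation with the padding $\prod_q\sqrt{\alpha_{i_q}+1}\ge 1$ are routine.
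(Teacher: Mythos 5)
Your proof is correct, but it is organized quite differently from the paper's. The paper expands $p$ in the full tensor-product orthonormal basis, applies Cauchy--Schwarz to separate $\|p\|^2$ from the sum of squared errors on the basis functions $p_{\bs{j}}$, bounds each such error by a scalar product-difference lemma ($|\prod_q s_q - \prod_q r_q| \leq d\epsilon(1+\epsilon)^{d-1}$ when $|s_q-r_q|\leq\epsilon$ and the $s_q$ are bounded by $1$, proved by induction), and then pays the factor $\prod_q \sqrt{\alpha_{i_q}+1}$ as the square root of the \emph{cardinality} of the basis-index set. You instead telescope at the operator level, $\bigotimes_q Q_q - \bigotimes_q \mathcal{I}_q = \sum_q (\bigotimes_{r<q} Q_r)\otimes(Q_q-\mathcal{I}_q)\otimes(\bigotimes_{r>q}\mathcal{I}_r)$, and control each summand by Fubini-based partial-operator $L^2$ bounds ($\|\mathcal{I}_q\|\leq 1$, $\|Q_q\|\leq 1+\epsilon$ on the admissible polynomial spaces); your per-coordinate degree bookkeeping and Fubini steps are sound, and the two arguments share a common core, since the paper's induction is just a scalar version of your telescoping with the same $1$ and $1+\epsilon$ bounds. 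The notable difference is where the $\sqrt{\alpha+1}$ factors come from: in the paper from counting basis terms, in yours from a deliberately weakened $\ell^1$--$\ell^\infty$ univariate estimate. This reveals that your route actually proves a strictly \emph{sharper} result: had you kept the sharp univariate bound $|Q(f)-\mathcal{I}(f)|\leq\epsilon\|f\|$ (valid by Parseval and Cauchy--Schwarz) in the central factor, the same telescoping gives $|\mathbb{A}_{d,\bs{i}}(p)-I(p)| \leq \big((1+\epsilon)^d-1\big)\|p\| \leq d\epsilon(1+\epsilon)^{d-1}\|p\|$, with no $\prod_q\sqrt{\alpha_{i_q}+1}$ factor at all. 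So the paper's argument buys elementary, per-basis-function reasoning that stays close to the residual definition, while yours buys a cleaner operator structure and a quantitatively stronger bound that you then intentionally relax to match the stated inequality.
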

\begin{proof}
  Given a multi-index $\bs{i} \in \N_0^d$, we define
  \begin{align*}
    \bs{\alpha} = \bs{\alpha}_{\bs{i}} &\coloneqq \left( \alpha_{i_{1}}, \ldots, \alpha_{i_d} \right)^T \in \N_0^d, & 
  \end{align*}
Let $p \in \otimes_{q=1}^d \Pi_{\alpha_{i_q}}$. Then there are coefficients $\widehat{p}_{\bs{j}}$ such that
  \begin{align*}
    p(\cdot) = \sum_{\bs{j} \leq \bs{\alpha}_{\bs{i}}} \widehat{p}_{\bs{j}} p_{\bs{j}}(\cdot),
  \end{align*}
  where
  \begin{align*}
    p_{\bs{j}}(\bs{x}) &= \prod_{k=1}^d p_{j_k}(x_k), & \bs{x} &= (x_1, \ldots, x_d)^T \in \R^d.
  \end{align*}
  Then
  \begin{align}\label{eq:prop-temp}
    \left| \mathbb{A}_{d,\bs{i}} (p) - I(p) \right|^2 \leq \left(\sum_{\bs{j} \leq \bs{\alpha}_{\bs{i}}} \left| \widehat{p}_{\bs{j}} \right| \left| \mathbb{A}_{d,\bs{i}}(p_{\bs{j}}) - I(p_{\bs{j}})\right|\right)^2 &\leq \left( \sum_{\bs{j} \leq \bs{\alpha}_{\bs{i}}} \widehat{p}^2_{\bs{j}} \right) \left( \sum_{\bs{j} \leq \bs{\alpha}_{\bs{i}}} \left| \mathbb{A}_{d,\bs{i}}(p_{\bs{j}}) - I(p_{\bs{j}})\right|^2 \right).
  \end{align}
  The first term, by Parseval's equality, is $\|p\|^2$. To bound the second term, we first show that, given $\bs{r}, \bs{s} \in \R^d$ satisfying
  \begin{align}\label{eq:rs-conditions}
    \sup_{q=1, \ldots, d} \left| s_q - r_q\right| &\leq \epsilon, & \sup_{q=1, \ldots, d} |s_q| \leq 1,
  \end{align}
  then $D_k(\bs{s}, \bs{r}) \coloneqq \left|\prod_{q=1}^k s_q - \prod_{q=1}^k r_q \right|$ satisfies
  \begin{align}\label{eqn:D_k}
    D_k(\bs{s}, \bs{r}) &\leq k \epsilon (1 + \epsilon)^{k-1}, & k = 1, \ldots, d.
  \end{align}
  This result can be established by induction, by first noting that $D_1(\bs{s}, \bs{r}) \leq \epsilon$. For some $k \geq 2$  assume $D_{k-1} \leq (k-1) \epsilon (1+\epsilon)^{k-2}$, then
\begin{equation*}
\begin{array}{l l}
D_{k} &= | \prod_{q=1}^k s_q - \prod_{q=1}^k r_q | \\
      \\
      &= | \prod_{q=1}^k s_q - s_k \prod_{q=1}^{k-1} r_q + s_k \prod_{q=1}^{k-1} r_q - \prod_{q=1}^k r_q | \\
      \\
      &= | s_k \left( \prod_{q=1}^{k-1} s_q - \prod_{q=1}^{k-1} r_q \right) + (s_k - r_k) \prod_{q=1}^{k-1} r_q | \\
      \\
      &\leq |s_k| D_{k-1} + |s_k - r_k| \prod_{q=1}^{k-1} |r_q|.
      \end{array}
      \end{equation*}
Since $|s_q| \leq 1$ and $|s_q - r_q| \leq \epsilon$, this implies that $|r_q| \leq 1+\epsilon$. Using the inductive hypothesis
\begin{equation*}
\begin{array}{l l}
D_{k} &\leq  D_{k-1} + \epsilon \prod_{q=1}^{k-1} (1+\epsilon)\\
       \\
      &\leq (k-1) \epsilon (1+\epsilon)^{k-2} + \epsilon  (1+\epsilon)^{k-1}\\
      \\
      &\leq (k-1) \epsilon (1+\epsilon)^{k-1} + \epsilon (1+\epsilon)^{k-1} =    k \epsilon (1+\epsilon)^{k-1}
 \end{array}
      \end{equation*}
yields \eqref{eqn:D_k}. Note then that
  \begin{align*}
    \left| \mathbb{A}_{d,\bs{i}}(p_{\bs{j}}) - I(p_{\bs{j}})\right| = \left| \prod_{q=1}^d \mathbb{X}_{i_q}(p_{j_q}) - \prod_{q=1}^d I(p_{j_q}) \right|.
  \end{align*}
  Since $\omega$ is a probability density, then $|I(p_{j_q})| \leq 1$ for all $j_q$. Furthermore, if the univariate rules $\mathbb{X}_i$ comprising $\mathbb{A}_{d,\bs{i}}$ satisfy the residual condition $\|\bs{R}\|_2 \leq \epsilon$ as in Algorithm \ref{alg:quad-nested}, then
  \begin{align*}
    \left| \mathbb{X}_{i_q}(p_{j_q}) - I(p_{j_q}) \right| = \left| R_{j_q} \right| \leq \|\bs{R}\|_2 \leq \epsilon.
  \end{align*}
  Thus, defining $s_q = \mathbb{X}_{i_q}(p_{j_q})$ and $r_q = I(p_{j_q})$ satisfies \eqref{eq:rs-conditions}, so that
  \begin{align*}
    \left| \prod_{q=1}^d \mathbb{X}_{i_q}(p_{j_q}) - \prod_{q=1}^d I(p_{j_q}) \right| = D_d(\bs{s}, \bs{r}) \leq d \epsilon (1 + \epsilon)^{d-1}.
  \end{align*}
  Using this in \eqref{eq:prop-temp} (and noting the summation has $\displaystyle \prod_{q=1}^d {(\alpha_{i_q}+1)}$ terms) yields the conclusion.
\end{proof}
The above characterization expresses the error committed by a tensorized quadrature rule when the composite univariate rules commit $\epsilon$ error on a particular subspace. Our error bound does not directly translate into an error committed by a sparse grid construction, but it does suggest that sparse grid multivariate qudarature errors can also scale like $\epsilon$. In addition, we observe in the following numerical experiments that our sparse grids constructed from $\epsilon$-approximate univariate grids perform well in practice.

\subsubsection{Multivariate Integration on Sparse Grids: Nonlinear ODE}\label{S4_1_5}
As mentioned previously, sparse grids are a common tool for integration in multiple dimensions. Application of nested quadrature rules in construction of sparse grids are useful since they reduce the total number of nodes in a sparse grid, and come with inexpensive error estimates. In this example we use our nested quadrature rule in construction of sparse grids to estimate the statistical moments for a parameterized nonlinear ordinary differential equation.

We consider the Lotka-Volterra equations, classical predator-prey equations, which are primarily used to describe the dynamics of biological systems. In particular, the evolution of population for species $x$ and $y$ is modeled as
\begin{align*}
\begin{array}{l}
\vspace{0.1cm}
\displaystyle \frac{\partial x}{\partial t}= a x- b xy, \quad x(0)=x_0\\
\displaystyle \frac{\partial y}{\partial t}=cxy - d y, \quad y(0)=y_0
\end{array}
\end{align*}
where $x$ and $y$ are the population of preys and predators and $a,b,c,d$ are modeled as random variables
\begin{align*}
\begin{array}{l l}
\vspace{0.1cm}
a = \exp{(0.1\xi_1)} + 1, & b= 2\exp{(0.1\xi_2)}+0.5\\
c = \exp{(0.1\xi_3)} + 2, & d= 3\exp{(0.1\xi_4)}+1\\
\end{array}
\end{align*}
where the $\xi_i$ are mutually independent and identically distributed random variables, each having distribution with weight $\omega(x)\varpropto x e^{-x^2}$ identical to the weight we used in Gauss-Kronrod-Patterson section~\ref{new_rules_UBD}. The initial population is $x_0=3,y_0=3$. We use a fourth order Runge-Kutta time integration method to simulate the time trajectory of the population for the range $t \in [0,10]$ with the time-step $dt=0.05$. Some solution realizations for the prey population are shown in Figure~\ref{fig_node_err}.


We now estimate the mean and variance of the prey population at time $2$, $x(2)$, which are computed as in \eqref{eq_m_s} via two quadrature rules: i) a sparse grid constructed from univariate nested quadrature rules and ii) the sparse grid constructed with univariate Gauss quadrature rules. We compute the relative error in mean and standard deviation similarly to Equation~\eqref{eq_rel_err} and use a 2881-point in $d=4$ dimensions\cite{Qsparse} to find the true mean and standard deviation.

We follow the sparse grid construction in~\cite{Heiss08} and use $|\mathbb{X}_{1}|=1,~|\mathbb{X}_{2}|=3,~|\mathbb{X}_{3}|=3,~|\mathbb{X}_{4}|=7~,|\mathbb{X}_{5}|=7,~|\mathbb{X}_{6}|=7$-point univariate rules (where $|.|$ denotes the size of set) for accuracy levels $i=1,\ldots,6$. The construction in~\cite{Heiss08} yields a rule for integration of order $2i-1$ corresponding to each level $i$. It should be noted that the three univariate rules i.e. $[1,3,7]$-point rules used in this example are nested consecutively i.e. 1-point rule is nested to the 3-point rule and 3-point rule is nested to 7-point rule as we generated them in Section~\ref{new_rules_UBD}.

The sparse grid construction in $d=4$ dimensions yields $n=[1,9,33,81,193,385]$ and $n=[1,9,41,137,385,953]$ for six accuracy levels corresponding to nested quadrature and Gauss quadrature respectively. 


Figure~\ref{fig_node_err} shows the relative errors in mean and standard deviation with respect to both nested quadrature and Gauss quadrature. It is apparent that using the nested quadrature rule requires smaller number of function evaluations in addition to yielding relatively smaller errors.

\begin{figure}[h]
  \begin{center}
  \resizebox{\textwidth}{!}{
\includegraphics[width=2.5in]{./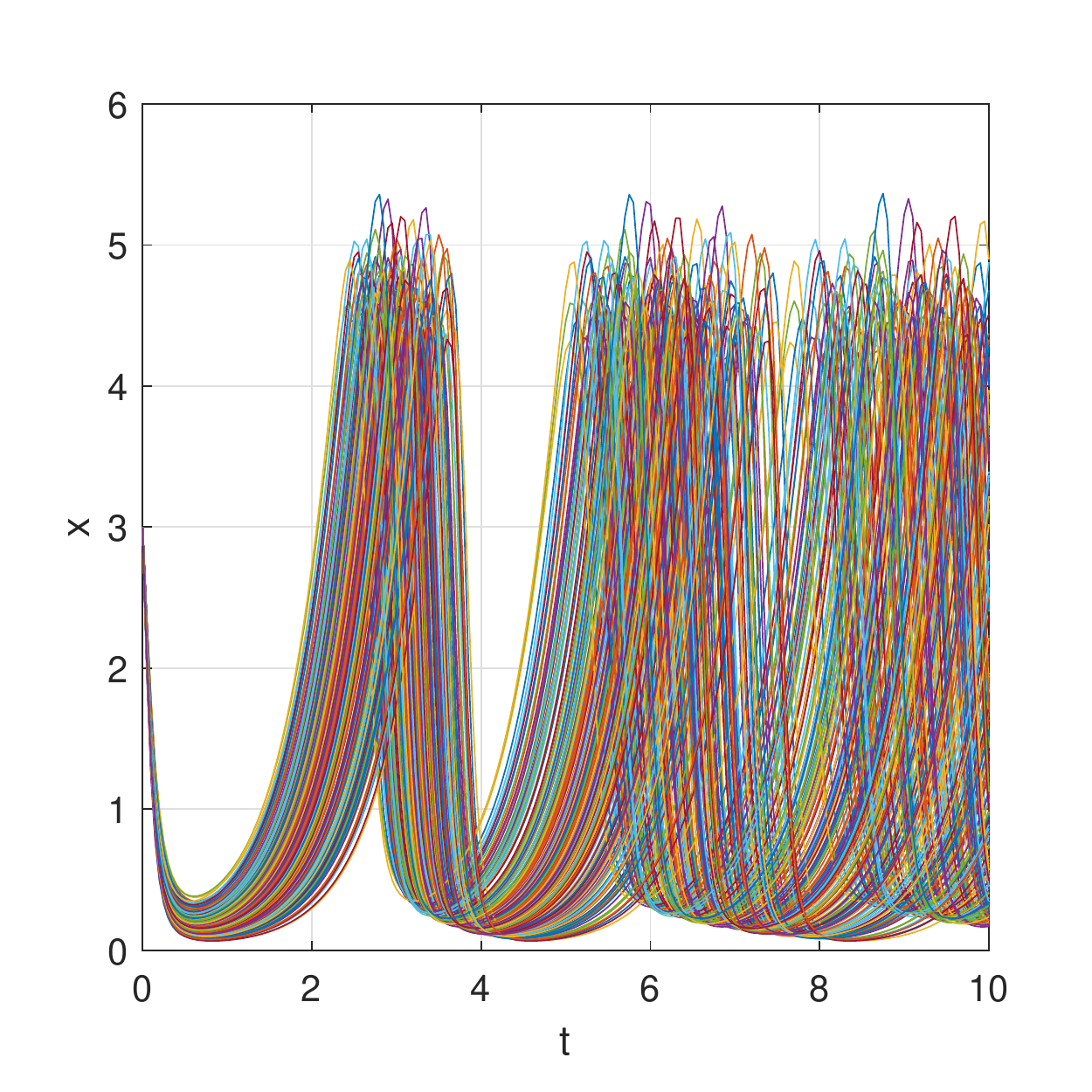} \hspace{-0.64cm}
\includegraphics[width=5.0in]{./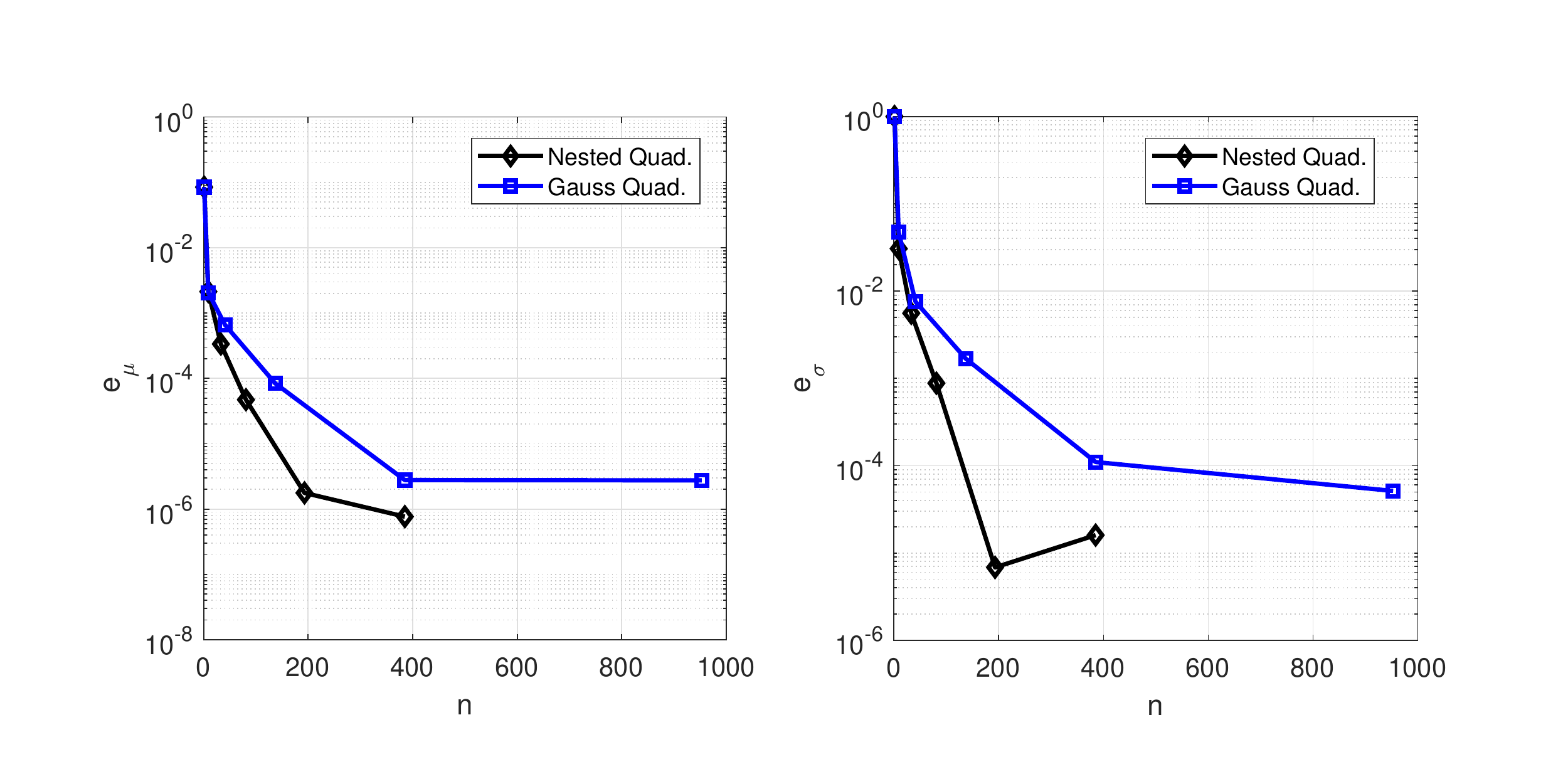}
}
  \end{center}
\caption{\small{Left: Realizations of prey's time history. Center and right: Relative error in mean and standard deviation for the nonlinear ODE}}\label{fig_node_err}
\end{figure}

\subsubsection{Multivariate Integration on Sparse Grids: Elliptic PDE}\label{S4_1_6}
In this example we use sparse grid with nested quadrature to estimate the statistical moment for the steady state heat distribution. Such distribution is modeled via an elliptic PDE with the form
\begin{align*}
\begin{array}{l l}
-\nabla . (a(\bm x,\bm \xi) \nabla u(\bm x,\bm \xi) ) = 1 & \bm x \in \Omega \\
u(\bm x,\bm \xi) = u_0 & \bm x \in \partial \Omega
\end{array}
\end{align*}
where $c$ is the heat conductivity which we consider as a random field in our example. We assume a Karhunen-Loeve expansion in the form of
\begin{align*}
\begin{array}{l l}
  a(\bm x, \bm{\xi}) = \phi_0 + \displaystyle \sum_{i=1}^d \sqrt{\lambda_i} \phi_i(\bm x) \xi_i
\end{array}
\end{align*}
with $\xi_i \sim U[-1,1]$ and $\phi_0$ is a positive constant. The eigenvalues and eigenmodes are obtained from decomposition of a Gaussian covariance kernel
\begin{equation}\label{sm1}
C(\bm x,\bm x')=\exp {\Big (}-{\frac {||\bm x-\bm x'||_2^{2}}{2l_c^{2}}}{\Big )},
\end{equation}
with $l_c=\sqrt{2}/2$.

The spatial domain $\Omega$ and Dirichlet boundary condition are shown in Figure~\ref{fig_mesh_pde}.
\begin{figure}[h]
\centering
\includegraphics[width=5.0in]{./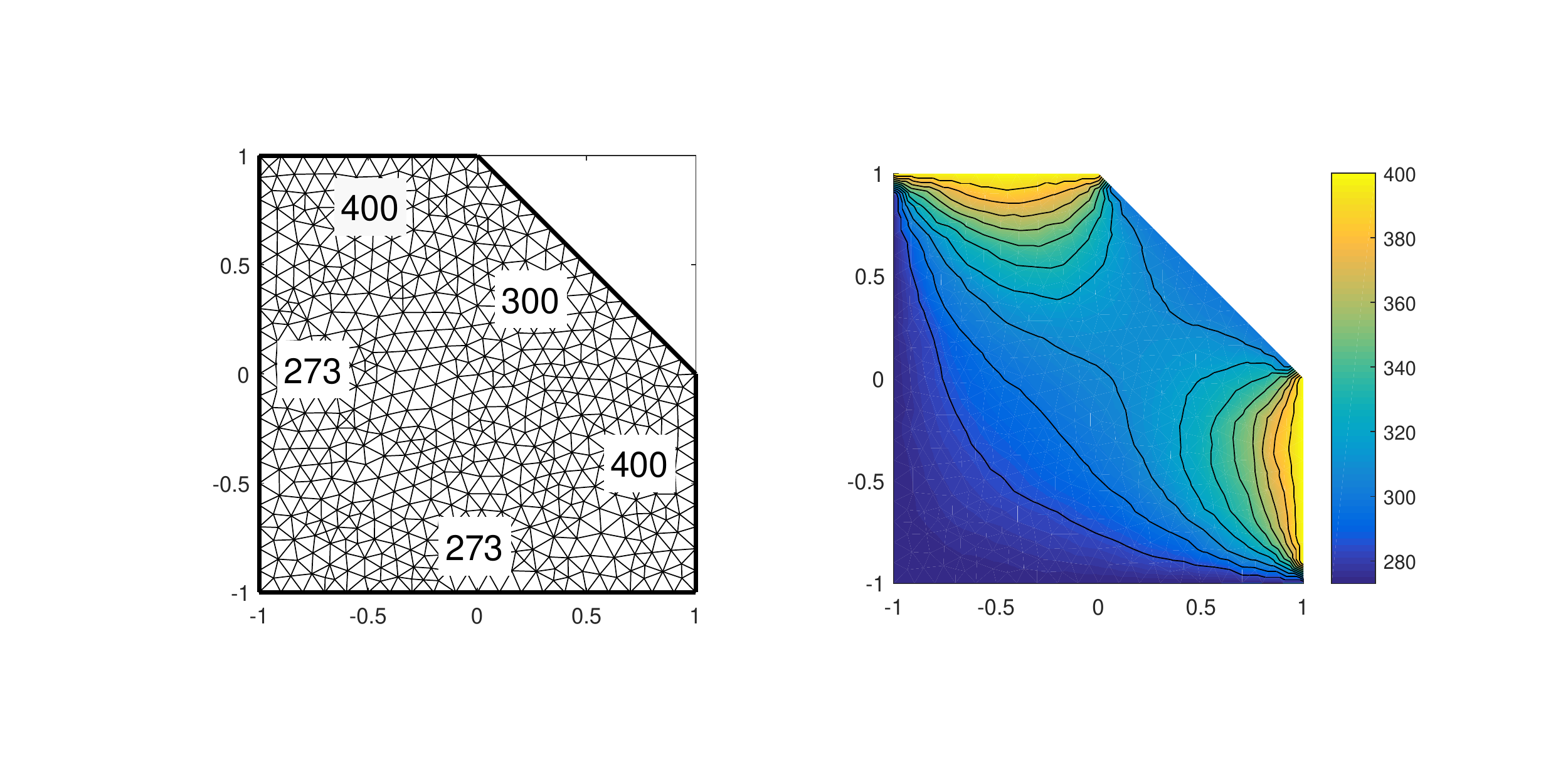}\\
\caption{\small{Finite element mesh with Dirichlet boundary condition (left) and a solution realization for the heat equation (right).}}\label{fig_mesh_pde}
\end{figure}
We truncate the expansion at $d=10$, capturing almost $90\%$ of the energy in the random field, $\sum_{i=1}^{10} \sqrt{\lambda_i}/\sum_{i=1}^{500} \sqrt{\lambda_i} = 0.8825$. The value $\phi_0$ is fixed at $\phi_0=3$.


Similarly to previous example we use three univariate rules and consider [1,3,3,7]-point rules for accuracy levels $i=1,\ldots,4$. The sparse grid construction for $d=10$ results in $n=[1,21,201,1201]$ and $n=[1,21,221,1581]$ nodes for four accuracy levels corresponding to nested quadrature and Gauss quadrature respectively.


Finally, we use a $5281$-point rule for estimating the true mean and standard deviation and focus on a particular node with coordinate $[0.0037,-0.0024]$ in the spatial domain to study the convergence. Figure~\ref{fig_pde_err} shows the relative errors in mean and standard deviation. It is again evident that relatively better accuracy is gained with smaller number of nodes when using a nested quadrature rule.

\begin{figure}[h]
\centering
\includegraphics[width=5.0in]{./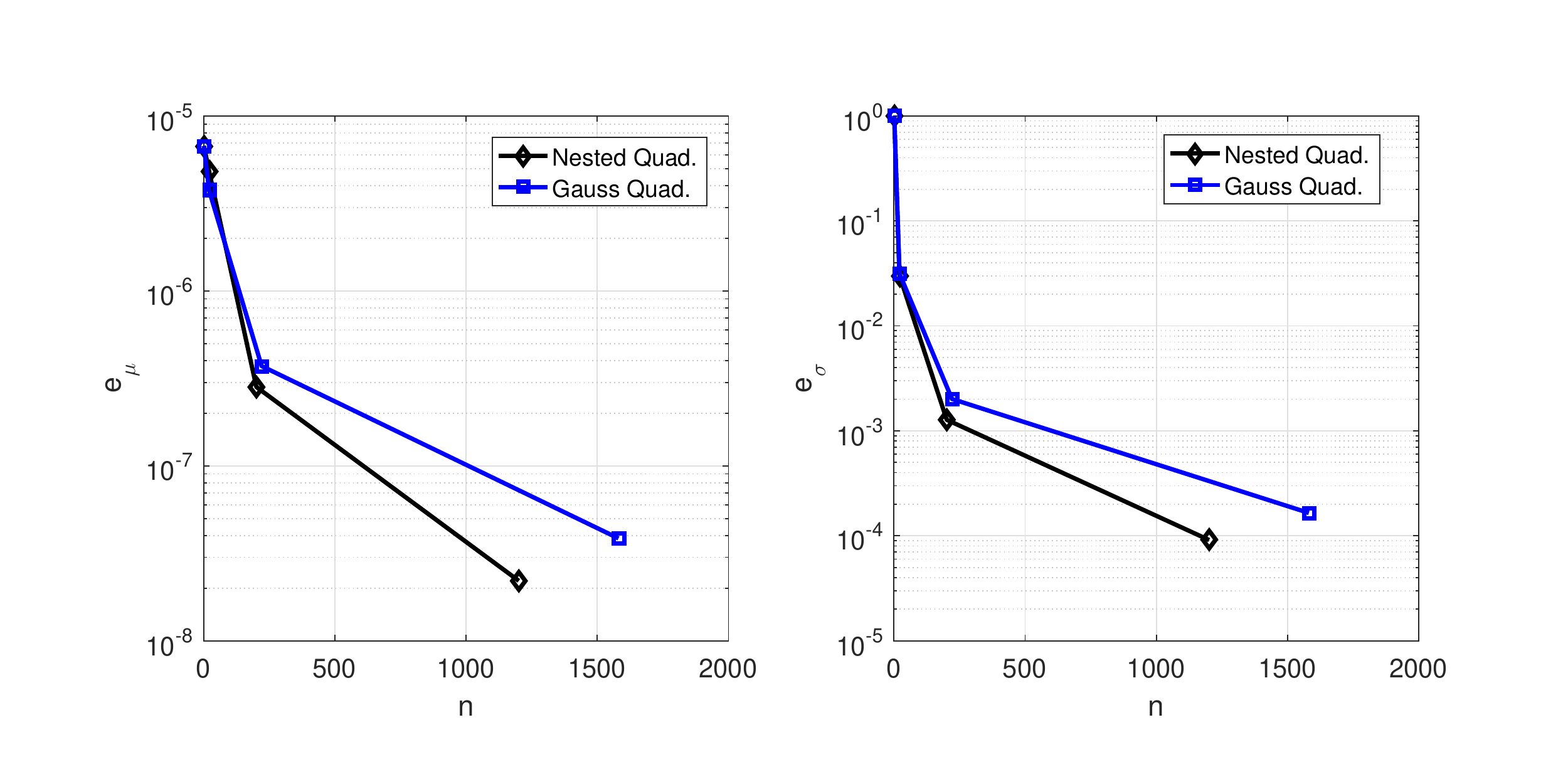}\\ \vspace{-0.5cm}
\caption{\small{Relative error in mean and standard deviation for the elliptic PDE}}\label{fig_pde_err}
\end{figure}

\section{Concluding Remarks}

A numerical method for systematic generation of nested quadrature rules is presented. Our method uses a flexible bi-level optimization that solves the moment-matching conditions for the main and nested rule.  The constraints, namely the node bounds and weight positivity are enforced throughout the optimization via a penalty method.  We generalize the Gauss-Kronrod rule for various weight functions including those with finite/infinite and symmetric/asymmetric supports. The extension of algorithm to generate Gauss-Kronod-Patterson rules i.e. nested sequence of quadrature is also discussed. In particular results for the nested sequence of Chebyshev quadrature are tabulated which have not been reported elsewhere. We used our nested univariate rules to construct sparse grids for integration in multiple dimensions. We showed the improved efficiency and accuracy of the resulting multidimensional quadrature on parameterized initial and boundary value problems when compared with Gauss quadrature-based sparse grids.

\textbf{Acknowledgements}\\
This research was sponsored by ARL under Cooperative Agreement Number W911NF-12-2-0023. The views and conclusions contained in this document are those of the authors and should not be interpreted as representing the official policies, either expressed or implied, of ARL or the U.S. Government. The U.S. Government is authorized to reproduce and distribute reprints for Government purposes notwithstanding any copyright notation herein. The first and third authors are partially supported by AFOSR FA9550-15-1-0467. The third author is partially supported by DARPA EQUiPS N660011524053 and NSF DMS 1720416.

\section*{References}


\end{document}